\newcommand{\red}[1]{\textcolor{red}{\bf #1}}
\newcommand{\CC}{\mathbb{C}}
\newcommand{\PP}{\mathbb{P}}
\newcommand{\DD}{\mathbb{D}}
\newcommand{\calC}{\mathcal{C}}
\newcommand{\calN}{\mathcal{N}}
\newcommand{\calG}{\mathcal{G}}
\newcommand{\calS}{\mathcal{S}}
\newtheorem{theorem}{Theorem}[section]
\newtheorem{lemma}[theorem]{Lemma}
\newtheorem{cor}[theorem]{Corollary}
\newtheorem{prop}[theorem]{Proposition}
\newtheorem{defn}[theorem]{Definition}
\newtheorem{remark}[theorem]{Remark}
\newtheorem{ex}[theorem]{Example}
\newtheorem*{question}{Question}
\title{Irreducibility of analytic arc-sections of hypersurface singularities.}
\author{Miguel Angel Marco-Buzunariz \footnote{Partially supported by MTM2016-76868-C2-2-P and Grupo ``Investigación en Educación Matemática'' of Gobierno de Aragón/Fondo Social Europeo.}, Maria Pe Pereira\footnote{Partially supported by MTM2017-89420-P, MTM2016-76868-C2-1-P and ERC Consolidator Grant NMST.}}
\begin{document}

\maketitle

\begin{abstract}
We explore the existence of irreducible and reducible arc-sections in an irreducible hypersurface singularity germ along finite projections. 

In particular we provide examples of irreducible isolated hypersurface
singularities for which no irreducible arc-sections exist, and show that
reducible ones always exist. Moreover, we give an algorithm to check if a given
projection allows irreducible arc-sections, and find them if they exist.


\end{abstract}
\section{Introduction}\label{sec:intro}
Given a hypersurface germ singularity $(X,\bar{0})\subseteq (\mathbb{C}^{n+1},\bar{0})$ defined by the equation $F(x_1,\ldots, x_{n+1})=0$, and an injective arc
$\gamma:(\mathbb{C},0)\to (\mathbb{C}^n,\bar{0})$, we say that the plane curve
defined by $F(\gamma(t),x_{n+1})=0$ is an \emph{arc-section} of $(X,\bar{0})$.

Consider, for example, a suspension singularity given by an equation of the form $z^m=f(x,y)$ with $f(x,y)$ an irreducible germ of order $d$ with $g.c.d.(m,d)=1$. 
Given any generic line $L$ in the $XY$-plane parametrized as $(at, bt)$, we consider the plane curve germ  given by $z^m=f(at, bt)$. It is easy to see that this plane branch is irreducible since the associated Newton polygon consists on a unique bounded segment with no integer points in its interior. Note that, in the case $ord(f)>m$, we have that the projection direction is not contained in the tangent cone of the surface at the origin viewed as a set in $\CC\PP^2$.


We wonder if this is the general case. That is:

\begin{question} Let $(X,\bar{0})$ be an irreducible hypersurface germ defined in $\CC^{n+1}$ by the equation $F(x_1,...,x_{n+1})=0$. Do there exist a linear projection $\pi:\CC^{n+1}\to \CC^n$, say $(x_1,...,x_{n+1})\mapsto (x_1,..,x_n)$ with $\pi\mid_{(X,\bar{0})}$ finite and an injective arc $\gamma:(\CC,0)\to (\CC^n,\bar{0})$ such that $F(\gamma(t),x_{n+1})=0$ is an irreducible plane curve germ?
\end{question}
This was privately formulated by A. Ploski and E. Garc\'ia Barroso to the second author. We study it here for the first time, developping some tools to study variations of the question in the surface case.

We give a negative answer to the question in Proposition~\ref{cor:ex}, by
showing a family of counterexamples. Furthermore, in
Theorem~\ref{thm:existfullyreducedsection} we show that arcs where the curve is
reducible always exist. Section~\ref{sec:alg} gives a method to determine if
arcs that produce irreducible sections do exist or not (and find them in case
they do).

Note that the existence of irreducible arc-sections in a hypersurface $(X,\bar{0})$ over arcs not tangent to the discriminant of a projection implies that $(X,\bar{0})$ is irreducible.

The article is structured as follows. Sections 2 and 3 fix the notation. In
Section 4 we give necessary conditions on the tangent cone of $(X,\bar{0})$ for
irreducible arc-sections to exist (see Lemma \ref{lem:L}). We also give
examples where they do never exist for any projection (see Example
\ref{ex:ce0}).
These are also examples of irreducible hypersurfaces $(X,\bar{0})$ whose intersection
with any smooth surface is reducible.

In Section \ref{sec:red} we see that reducible arc-sections always exist for every finite projection $\pi|_{X,\bar{0}}$ , and moreover, for every finite projection $\pi|_{X,\bar{0}}$, there are arc-sections with as many components as the degree of $\pi|_{X,\bar{0}}$.

In subsection \ref{sec:gen}, we see that, apart from the cases where the
tangent cone of $(X,\bar{0})$ is a union of lines, the generic arc is never
irreducible. In this case, we see that the arc with irreducible arc-section has
to be tangent to the discriminant. In Section \ref{sec:disc}, we give an
alternative description of the tangent cone of the discriminant in many cases
which can be usefull in order to simplify the search for irreducible arc-sections.

In Section \ref{sec:alg}, we give an algorithm  to determine whether irreducible arc-sections over an arc not contained in the discriminant exists or not; and find them in case they do.

\section{Notation and general setting}\label{sec:not}

Let $(X,\bar{0})$ be a hypersurface germ  defined by $F(x_1,...,x_{n+1})=0$.
Let $m$ be the multiplicity of $(X,\bar{0})$. Let $\pi:\CC^{n+1}\to \CC^n$ be a
linear projection such that $\pi|_{(X,\bar{0})}$ is finite (see Remark
\ref{re:finite}). After a linear change of coordinates we can assume that $\pi$
is given by $$(x_1,...,x_{n+1})\mapsto(x_1,..,x_n).$$
We say the projection is \emph{transverse} (with respect to $(X,\bar{0})$) if
it is in a direction not contained in the tangent cone of $(X,\bar{0})$. It is
\emph{non-transverse} in the other case.

Let $\Delta$ denote the discriminant of $\pi\mid_{X}$. Note that the restriction $\pi|_{X\setminus \pi^{-1}(\Delta)}$ is a regular covering of certain degree $d$. Then, given a loop $\delta$ in $\CC^{n}\setminus\Delta$ we have the monodromy of the covering along $\delta$ which gives an element of $Sym(d)$, the permutation group of $d$ points.

Since $\pi|_{X,\bar{0}}$ is finite, we can assume $F(x_1,...,x_{n+1})$ is a Weierstrass polynomial in $x_{n+1}$ of degree $d$. Moreover, if the projection is transverse then $d=m$. In the other case, we have $d>m$.

We denote by $C_{X,\bar{0}}$ the tangent cone of $(X,\bar{0})$ seen, either as a conic subvariety germ of $(\CC^{n+1},\bar{0})$ or a subvariety of $\CC\PP^n$.

Given $\pi:\CC^{n+1}\to \CC^n$ as above,  we  denote by $p_\pi$  the point in $\CC\PP^n$ that gives the projection direction. We denote by $\overline{\pi}:\CC\PP^n\to \CC\PP^{n-1}$ the projection from $p_\pi$.

Given a subspace $V$ of $\CC\PP^n$, we denote by $<V>$ the linear subspace of $\CC^{n+1}$ that it generates.


\section{Arc-sections.}\label{sec:arc}

In this section we  fix notation and definitions regarding arcs and arc-sections.

\begin{defn} An \emph{arc} in $(\CC^n,\bar{0})$ is a holomorphic mapping germ
$\gamma:(\CC,0)\to (\CC^n,\bar{0})$. A \emph{representative} of an arc $\gamma$ is
the restriction of $\gamma$ to some closed disk $\DD\subset\CC$ centered at
$0\in\CC$ contained in the domain of $\gamma$.  A \emph{smooth arc} is an arc
$\gamma$ such that $\gamma'(0)\neq \bar{0}$. We say that an arc $\gamma(t)$ is
\emph{transversal} to a subvariety $Y$ of $(\CC^n,\bar{0})$ at $t_0\in \CC$ if
$\gamma'(t_0)\neq \bar{0}$ and $\gamma'(t_0)$ is not contained in the tangent
space of $Y$ at $\gamma(t_0)$. We say that $\gamma(t)$ is \emph{transversal} to
$Y$ if it is transversal at any point.
\end{defn}

In general we will only consider injective representatives $\gamma|_\DD$ (which
always exist for generically 1:1 arcs). In particular we will assume that
$\DD$ is small enough so that $(\gamma|_{\DD})^{-1}(\bar{0})=\{0\}$. Then, note
that $\gamma|_{\partial \DD}$ is a simple loop in $\CC^n\setminus\{\bar{0}\}$.



\begin{lemma}\label{lema-equiv-irred-monod} Let $(X,\bar{0})$, $F(x_1,...,x_{n+1})\in \CC\{x_1,...,x_{n+1}\}$ and $\pi$ be as above. Given an arc $\alpha:(\CC,0)\to (\CC^n,\bar{0})$ whose image is not completely contained in the discriminant $\Delta$, the following are equivalent:
\begin{enumerate}
\item $F(\alpha(t),x_{n+1})=0$ defines an irreducible germ  at $(0,0)\in \DD\times \CC$
\item $\pi^{-1}(\alpha(\DD))\cap X$ is an irreducible germ at $\bar{0}\in \CC^{n+1}$ for an injective representative $\alpha:\DD\to \CC^n$ such that $\alpha(\DD)\cap \Delta=\{\bar{0}\}$
\item the monodromy along $\alpha(\partial \DD)$ is transitive (with $\alpha|_{\DD}$ a representative as in (2)), that is, it gives a $d$-cycle of $Sym(d)$.
\end{enumerate}
\end{lemma}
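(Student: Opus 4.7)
The plan is to establish $(1)\Leftrightarrow(2)$ via an explicit parametrization of the preimage curve, and then $(2)\Leftrightarrow(3)$ by a covering space argument on a punctured disk.

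First I would introduce the holomorphic map
\[
\Phi\colon \DD\times\CC\longrightarrow \CC^{n+1},\qquad \Phi(t,z)=(\alpha(t),z),
\]
whose image is exactly $\pi^{-1}(\alpha(\DD))$ and whose preimage of $X$ is exactly $C_1:=\{(t,z)\in\DD\times\CC \,:\, F(\alpha(t),z)=0\}$. Because $\alpha|_{\DD}$ is an injective continuous map from the compact disk $\DD$, it is a homeomorphism onto $\alpha(\DD)$, so $\Phi$ is a homeomorphism onto $\pi^{-1}(\alpha(\DD))$, restricting to a homeomorphism from $C_1$ onto $C_2:=\pi^{-1}(\alpha(\DD))\cap X$. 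Since $\alpha(\DD)\not\subseteq \Delta$ and $\pi|_X$ is finite, both $C_1$ at $(0,0)$ and $C_2$ at $\bar{0}$ are one-dimensional analytic germs.

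For $(1)\Leftrightarrow(2)$, I would invoke the standard branch decomposition of a curve germ: for a sufficiently small Milnor representative of a one-dimensional analytic germ, the number of irreducible components equals the number of connected components of the punctured representative. Applying this to $C_1$ and $C_2$, and using that $\Phi$ is a pointed homeomorphism between small representatives, the two counts agree; in particular $C_1$ is irreducible at $(0,0)$ iff $C_2$ is irreducible at $\bar{0}$.

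For $(2)\Leftrightarrow(3)$, I would shrink the representative so that $\alpha(\DD)\cap\Delta=\{\bar{0}\}$, which is possible since $\alpha(\DD)\not\subseteq\Delta$. Then
\[
\pi|_{C_2\setminus\{\bar{0}\}}\colon C_2\setminus\{\bar{0}\}\longrightarrow \alpha(\DD)\setminus\{\bar{0}\}
\]
lies entirely over $\CC^n\setminus\Delta$, hence is a regular topological covering of degree $d$. Via the homeomorphism $\alpha$, the base is identified with the punctured disk $\DD\setminus\{0\}$, whose fundamental group is infinite cyclic and generated by the class of $\alpha(\partial\DD)$. Therefore the connected components of $C_2\setminus\{\bar{0}\}$ correspond bijectively to the orbits of the monodromy $\sigma\in\mathrm{Sym}(d)$ along $\alpha(\partial\DD)$ acting on the $d$ sheets. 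A single permutation acts transitively on $\{1,\dots,d\}$ if and only if it is a single $d$-cycle, so transitivity of $\sigma$ is equivalent to $C_2\setminus\{\bar{0}\}$ being connected, i.e.\ to $C_2$ being irreducible at $\bar{0}$.

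The main point requiring attention is the bookkeeping on the representative $\DD$: it must be chosen small enough that simultaneously $\alpha|_{\DD}$ is injective, $\alpha(\DD)\cap\Delta=\{\bar{0}\}$, and both $C_1$ and $C_2$ are Milnor-type representatives on which ``number of irreducible components'' matches ``number of connected components of the punctured space''. All of these are standard but need to be set up jointly at the start; once they are fixed, the three equivalences reduce to the branch decomposition of a curve germ, the orbit-component dictionary for coverings, and the elementary remark that a transitive cyclic action on $d$ letters is a single $d$-cycle.
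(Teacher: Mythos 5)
Your proposal is correct and follows essentially the same route as the paper's proof: the map $(t,z)\mapsto(\alpha(t),z)$ as a homeomorphism identifying the two curve germs for $(1)\Leftrightarrow(2)$, and the degree-$d$ covering over the punctured disk (whose fundamental group is carried by $\alpha(\partial\DD)$) matching orbits of the monodromy with connected components for $(2)\Leftrightarrow(3)$. The only difference is presentational: you spell out the Milnor-representative bookkeeping and the orbit--component dictionary more explicitly than the paper does.
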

\begin{proof}
To see $(1)\Leftrightarrow(2)$ we observe that the mapping $$\CC^2\to \CC^{n+1}$$
$$(t,x_{n+1})\mapsto (\alpha(t),x_{n+1})$$ is injective restricted to $\DD\times \CC$ since $\alpha|_\DD$ is injective. Then, its restriction to the germ $F(\alpha(t),x_{n+1})=0$ is a homeomorphism onto $\pi^{-1}(\alpha(\DD))\cap X$. Then, one of them is irreducible if and only if so is the other.

To see $(2)\Leftrightarrow(3)$ we note that the singular locus of $\pi^{-1}(\alpha(\DD))\cap X$ is just the origin. Then, $\pi^{-1}(\alpha(\DD))\cap X$  is irreducible if and only if it remains connected after removing the origin. The map $\pi\mid_{\pi^{-1}(\alpha(\DD\setminus \{0\}))\cap X} $ onto $\alpha(\DD\setminus \{0\})$ is a cover over a space that can be retracted to the simple closed loop $\alpha(\partial\DD)$. So $\pi^{-1}(\alpha(\DD\setminus \{0\}))\cap X$ is connected if and only if the monodromy along $\alpha(\partial\DD)$ is transitive.
\end{proof}
Note that equivalence of (1) and (2) is also true if the arc is contained in the discriminant.

Last lemma justifies the following definition and allows us to think  geometrically about the problem.

\begin{defn} Let $\pi:\CC^{n+1}\to \CC^n$ be a linear projection such that $\pi|_{(X,\bar{0})}$ is finite. An \emph{arc-section} of $(X,\bar{0})$ is 
a section of $(X,\bar{0})$ of the form $\pi^{-1}(\alpha(\DD))\cap X$ for certain injective arc representative $\alpha:\DD\to \CC^n$ that is either contained in the discriminant $\Delta$ of $\pi|_{X}$ or $\alpha(\DD)\cap \Delta=\{\bar{0}\}$.

\end{defn}


So, our main question is about the existence or non-existence of irreducible and reducible arc-sections for irreducible hypersurface germs. 

Note that if $\alpha(\DD)$ is contained in $\Delta$, then there are embedded components in the arc-section. So, for arcs in $\CC^n$ whose image is contained in the discriminant, we ask whether they give irreducible arc-sections considered with its reduced structure.

\begin{remark}\label{re:finite} As we said, we always assume $\pi|_{X,\bar{0}}$
is finite/proper, which is equivalent to $\pi^{-1}(\bar{0})=\{\bar{0}\}$. If
the projection is not proper, then  an arc-section either has
$\pi^{-1}(\bar{0})$ as irreducible component or is a (cylindrical) surface.
Note that a non-trasverse linear
projection, that is a projection in a direction contained in the tangent cone
of $(X,\bar{0})$, is always proper in case this line of the tangent cone is not
contained in $X$.
\end{remark}

\section{Looking at the tangent cone of an arc-section.}\label{sec:tan}

Here we will see some necessary conditions on the tangent cone of $(X,\bar{0})$ for the existence of irreducible arc-sections.


\begin{lemma}\label{lem:L}
Let $\pi:\CC^{n+1}\to \CC^n$ be a linear projection such that $\pi|_{(X,\bar{0})}$ is finite.
Let $L$ be a line in $\CC\PP^n$ containing the projection direction. Assume the intersection (with reduced structure) of $L$ and $C_{X,\bar{0}}$ in $\CC\PP^n$ is a union of  isolated points. Consider any arc in $(\CC^n,\bar{0})$ with tangent at the origin in the direction given by a point of $L$. Then the reduced tangent cone of the corresponding arc-section  is given by the intersection $C_{X,\bar{0}}\cap L$. In particular, if the intersection is 2 or more different points then the arc section is reducible.
\end{lemma}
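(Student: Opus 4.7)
The plan is to reduce the lemma to a plane-curve computation. After a linear change of coordinates in $\CC^n$ that carries the tangent direction of $\alpha$ to the last coordinate axis $e_n\in\CC^n$, the line $L$ becomes the projective line $\{x_1=\cdots=x_{n-1}=0\}\subset\CC\PP^n$, and $\langle L\rangle$ is the $2$-plane in $\CC^{n+1}$ spanned by $e_n$ and $p_\pi=e_{n+1}$. Using the homeomorphism of Lemma~\ref{lema-equiv-irred-monod}, the arc-section $S=\pi^{-1}(\alpha(\DD))\cap X$ is identified with the plane curve germ $\{G(t,x_{n+1})=0\}$ via $\phi(t,x_{n+1})=(\alpha(t),x_{n+1})$, where $G(t,x_{n+1}):=F(\alpha(t),x_{n+1})$. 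I will compute the reduced tangent cone of $S$ by listing the branches of $\{G=0\}$, parametrising each by a Puiseux series, and reading off the leading coordinates of its image under $\phi$.

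The key computation is the $(1,m)$-weighted initial form of $G$, where $t$ has weight $1$ and $x_{n+1}$ has weight $m$ ($m$ being the multiplicity of $F$). Writing $F=F_m+F_{m+1}+\cdots$ in homogeneous parts and using that $\alpha_i(t)$ has order $\geq m+1$ for $i<n$ while $\alpha_n(t)=t^m+O(t^{m+1})$, a monomial-by-monomial bound shows that the only weight-$m^2$ contribution to $G$ is
\[
\tilde F(t^m,x_{n+1}),\qquad \tilde F(u,v):=F_m(0,\ldots,0,u,v),
\]
every other term having weight strictly greater than $m^2$. Because $L\not\subseteq C_{X,\bar 0}$ (the intersection is finite), $\tilde F$ is a nonzero binary form, and I factor it as
\[
\tilde F(u,v)=c\,u^{a}v^{b}\prod_{j}(u-r_j v)^{e_j},
\]
with $r_j\in\CC\setminus\{0\}$ pairwise distinct and $a+b+\sum_j e_j=m$. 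The points of $L\cap C_{X,\bar 0}\subset\CC\PP^n$ are then exactly $p_\pi$ (if $a>0$), $[0{:}\cdots{:}0{:}1{:}0]$ (if $b>0$), and $[0{:}\cdots{:}0{:}r_j{:}1]$ for each $j$.

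Invoking the Newton--Puiseux description of the branches of $\{G=0\}$ relative to this weighted initial form, each factor above produces a non-empty cluster of branches of $\{G=0\}$ with prescribed leading Puiseux behaviour: the factors $(u-r_j v)^{e_j}$ give branches with $x_{n+1}=r_j^{-1}t^m+(\text{higher order})$; the factor $v^b$ gives branches in which $x_{n+1}$ has $t$-order strictly greater than $m$; and the factor $u^a$ gives branches lying on the Newton-polygon edges above the weight-$m^2$ edge, in which $x_{n+1}$ has $t$-order strictly less than $m$. Pushing each such branch through $\phi$ and comparing the orders of the coordinates, the three cases produce tangent directions $[0{:}\cdots{:}0{:}r_j{:}1]$, $[0{:}\cdots{:}0{:}1{:}0]$ and $p_\pi$ respectively; these exhaust the points of $L\cap C_{X,\bar 0}$. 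Hence the reduced tangent cone of $S$ equals $L\cap C_{X,\bar 0}$, and the ``in particular'' clause follows because an analytic branch has a one-dimensional tangent cone, so two distinct tangent directions force two distinct branches.

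I expect the main obstacle to be the verification of the weighted initial form in the second paragraph---namely, the claim that the higher-order terms of $\alpha$ and the higher homogeneous pieces of $F$ cannot contaminate the weight-$m^2$ level---together with the bookkeeping of branches in the non-transverse case, where $p_\pi\in C_{X,\bar 0}$ forces $a>0$ and the Newton polygon of $G$ acquires an additional edge above the one encoding $\tilde F(t^m,x_{n+1})$.
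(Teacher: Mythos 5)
Your proof is correct in substance but follows a genuinely different route from the paper's. The paper argues geometrically: it blows up the origin of $\CC^{n+1}$, notes that the strict transforms of $X$ and of the surface $\pi^{-1}(\alpha(\DD))$ meet the exceptional divisor in $C_{X,\bar{0}}$ and $L$ respectively, and then uses a dimension count (a hypersurface and a surface not contained in it, inside a smooth $(n+1)$-fold, meet in a curve) to produce at least one branch of the arc-section tangent to each point of $C_{X,\bar{0}}\cap L$; the reverse inclusion is the trivial containment of tangent cones. That argument is coordinate-free, three lines long, and works in all dimensions at once. Your Newton--Puiseux computation reaches the same conclusion but buys more: the exponents $a,b,e_j$ of your factorization of $\tilde F$ count (with multiplicity) the branches tangent to each point of $C_{X,\bar{0}}\cap L$, you get the leading Puiseux exponents of each cluster, and the transverse/non-transverse dichotomy ($a=0$ versus $a>0$, forcing $d>m$ and the extra upper edge of the Newton polygon) is made completely explicit --- information the paper later needs and re-derives elsewhere. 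The price is the bookkeeping you already flag, plus one slip you should repair: you use $m$ both for the multiplicity of $F$ and for the order of the arc, writing $\alpha_n(t)=t^m+O(t^{m+1})$; for a general arc of order $k$ the weights must be $(1,k)$ and the initial form $\tilde F(t^k,x_{n+1})$ lives in weight $km$, not $m^2$. The argument is unchanged by this substitution, but as written it only covers arcs of order exactly $m$. You should also note explicitly that when $b>0$ the cluster of branches with $\operatorname{ord}_t x_{n+1}>k$ may degenerate to the component $x_{n+1}\equiv 0$ (if $F(\alpha(t),0)\equiv 0$), which still has tangent direction $[0:\cdots:0:1:0]$, so the conclusion survives.
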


\begin{proof}
Note first that the tangent space of a set of the form $\pi^{-1}(\alpha(\DD))$ 
is the plane generated by the projection direction of $\pi$ and the tangent of the arc $\alpha$. Viewed in $\CC\PP^n$, it is the line joining the corresponding points.

After the blowing up of the origin of $\CC^{n+1}$, we get the strict transform of the set $\pi^{-1}(\alpha(\DD))$ and the hypersurface $(X,\bar{0})$. Their common intersection with the exceptional divisor is a set of isolated points by hypothesis. Around any of these points we have the intersection of a hypersurface and a surface that is not contained in it (because the projection is proper) in a smooth space. Then, the intersection is a (not necessarily irreducible) curve. Hence, for any point of $C_{X,\bar{0}}\cap L$ we have at least one component of the arc section.

\end{proof}

Note that the only case in which the hypothesis of the previous proposition are not satisfied is when $L$ is contained in $C_{X,\bar{0}}$.  

Note also that, if $C_{X,\bar{0}}$ intersects every line in more that one
point, no irreducible arc-sections can exist. This happens, for instance, if
$C_{X,\bar{0}}$ is a curve with degree bigger than three and no hyperflexes.

Then, we can summarize our conclusion in two propositions:

\begin{prop}\label{cor:ex} There exist irreducible hypersurface germs $(X,\bar{0})$ with all its arc-sections being reducible. Moreover, if we apply any analytic change of coordinates, the resulting germs also satisfy this property.
\end{prop}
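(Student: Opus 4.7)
The plan is to exploit Lemma~\ref{lem:L}: I will produce an irreducible hypersurface germ $(X,\bar{0})$ whose projective tangent cone $C_{X,\bar{0}} \subset \CC\PP^n$ is reduced, contains no line component, and has the property that \emph{every} line $L \subset \CC\PP^n$ meets it in at least two distinct reduced points. Granting this, for any finite projection $\pi$ with projection point $p_\pi$ and any arc $\alpha$ with tangent direction $q$, the line $L$ joining $p_\pi$ and $q$ satisfies the hypotheses of Lemma~\ref{lem:L}, so the reduced tangent cone of the corresponding arc-section is $C_{X,\bar{0}} \cap L$ and consists of at least two distinct lines through $\bar{0}$. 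Hence every arc-section is reducible.

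For the construction I would work in $\CC^3$ and take $X = \{F = 0\}$ where $F(x,y,z)$ is a homogeneous polynomial of degree $d \geq 4$ defining a smooth irreducible plane curve $C \subset \CC\PP^2$ with no hyperflex (no tangent line of contact multiplicity $\geq 4$). Then $X$ is the affine cone over $C$, hence irreducible, and $C_{X,\bar{0}} = C$ (in the projective picture). To verify the line-intersection property, note that any $L \not\subset C$ meets $C$ in $d$ points counted with multiplicity. A tangent with contact multiplicity $k \in \{2,3\}$ still leaves $d - k \geq 1$ further distinct intersection points, for a total of at least two; the only way to collapse to a single reduced point is contact multiplicity $d$, which for $d = 4$ is exactly the excluded hyperflex case, and which is impossible for $d \geq 5$ since hyperflex contact is only $4$. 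Existence of such a quartic follows from a Zariski-open genericity argument. For a concrete choice one may take the Klein quartic $x^3 y + y^3 z + z^3 x$, whose $24$ inflections are all simple by Pl\"ucker's formula combined with the transitivity of its automorphism group.

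For the second assertion, let $\phi : (\CC^3, \bar{0}) \to (\CC^3, \bar{0})$ be an analytic isomorphism. The tangent cone is a linear invariant: $C_{\phi(X), \bar{0}} = d\phi_0(C_{X,\bar{0}})$, which is projectively equivalent to the original $C$. Projective equivalences preserve the set-theoretic pattern of line intersections, so $C_{\phi(X), \bar{0}}$ inherits the property that every line meets it in $\geq 2$ distinct reduced points, and no line lies inside it. The first argument therefore applies verbatim to $\phi(X)$. I expect the only genuinely substantive step to be the verification of the hyperflex-free condition; once that is in hand, everything else is a direct application of Lemma~\ref{lem:L} and the coordinate-invariance of the tangent cone.
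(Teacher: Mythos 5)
Your proposal is correct and follows essentially the same route as the paper: apply Lemma~\ref{lem:L} to the cone over a smooth plane quartic with no hyperflex (the paper's Example~\ref{ex:ce0} uses $x^4+y^4+x^2z^2+yz^3+z^4=0$ where you offer the Klein quartic), and use the fact that an analytic change of coordinates acts linearly on the tangent cone. One side remark of yours is wrong, though harmless: total tangency (a line meeting the curve at a single point with contact multiplicity $d$) is \emph{not} automatically impossible for $d\geq 5$ --- e.g.\ the Fermat curve $x^d+y^d+z^d=0$ has $d$-fold tangent lines at the points $(1:\zeta:0)$ with $\zeta^d=-1$ --- so for higher degrees one still needs a genericity hypothesis excluding such points, exactly as the paper imposes in Example~\ref{ex:ce}; your degree-$4$ construction is unaffected and suffices for the proposition.
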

\begin{cor}\label{cor:ex2} There exist  irreducible hypersurface germs $(X,\bar{0})$ such that the intersection with any smooth surface germ $(Y,\bar{0})$ is reducible.
\end{cor}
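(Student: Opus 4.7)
The plan is to reduce the corollary to Proposition~\ref{cor:ex} by realizing any smooth surface germ through $\bar 0$ as an arc-cylinder for a well-chosen projection. Let $(X,\bar 0)$ be one of the hypersurface germs provided by Proposition~\ref{cor:ex}, so that all arc-sections of $X$ (for every finite linear projection) are reducible, and this property is preserved under every analytic change of coordinates. Fix an arbitrary smooth surface germ $(Y,\bar 0)\subseteq (\CC^{n+1},\bar 0)$. I want to identify $Y$ with a set of the form $\pi^{-1}(\alpha(\DD))$ after a suitable analytic change of coordinates, and then invoke the proposition.

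First, since $Y$ is smooth and 2-dimensional, the implicit function theorem provides an analytic automorphism $\varphi$ of $(\CC^{n+1},\bar 0)$ straightening $Y$ to the linear plane $\Pi:=\{x_3=\cdots=x_{n+1}=0\}$. Set $X':=\varphi(X)$; by the ``moreover'' in Proposition~\ref{cor:ex}, $X'$ still has all arc-sections reducible, so it suffices to show that $X'\cap \Pi$ is reducible. Now $\Pi$ is a linear 2-plane, hence it is invariant under translation by any vector $v\in\Pi$, i.e.\ $\Pi=\pi_v^{-1}(\pi_v(\Pi))$ where $\pi_v:\CC^{n+1}\to\CC^n$ denotes the linear projection with kernel $\CC v$. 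Choose $v\in\Pi\setminus\{0\}$ so that the line $\CC v$ is not contained in $X'$; by Remark~\ref{re:finite}, the projection $\pi_v|_{X'}$ is then finite. The image $\ell:=\pi_v(\Pi)$ is a line through $\bar 0$ in $\CC^n$, and taking any linear parametrization $\alpha:(\CC,0)\to \ell$ gives, for a small enough disk $\DD$, the identification $\Pi=\pi_v^{-1}(\alpha(\DD))$. Hence $X'\cap \Pi$ is an arc-section of $X'$ along $\alpha$, so it is reducible by the defining property of $X'$; applying $\varphi^{-1}$ transports this to the reducibility of $X\cap Y$.

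The only delicate step in the argument is the existence of the direction $v\in\Pi$ with $\CC v\not\subset X'$. This is immediate provided $\Pi\not\subset X'$, for then $X'\cap\Pi$ is a proper analytic subset of the 2-plane $\Pi$ and only finitely many of the lines through $\bar 0$ in $\Pi$ can be contained in it. The remaining degenerate case $Y\subset X$ cannot arise in the setting where the proposition is applied: the examples provided by Proposition~\ref{cor:ex} are irreducible surface germs (i.e.\ $n=2$) singular at $\bar 0$, and a smooth 2-dimensional germ $Y$ cannot be contained in a 2-dimensional irreducible singular germ $X$ as that would force $Y=X$ as germs and contradict smoothness. Thus the reduction to Proposition~\ref{cor:ex} goes through, and the main point is simply the geometric observation that \emph{every} smooth surface germ becomes a linear cylinder after an analytic change of coordinates, so the apparently much larger class of ``intersections with smooth surfaces'' coincides with the class of arc-sections up to analytic equivalence.
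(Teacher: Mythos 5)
Your argument is correct and is exactly the derivation the paper intends: the corollary is stated as an immediate consequence of Proposition~\ref{cor:ex}, obtained by straightening the smooth surface $Y$ to a linear $2$-plane and recognizing that plane as $\pi_v^{-1}$ of a line for a projection along a direction $v$ in the plane. Your additional checks (finiteness of $\pi_v|_{X'}$ via $\CC v\not\subset X'$, and the exclusion of the degenerate case $Y\subset X$ because the examples are singular irreducible surface germs) fill in details the paper leaves implicit, but the route is the same.
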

\begin{ex}\label{ex:ce0}
The smooth quartic given by $x^4 + y^4 + x^2z^2 + yz^3 + z^4=0$ does not have hyper-flexes, and then it is an explicit example. Note that the set of smooth quartics without hyperflexes is Zariski open in the set of  quartics. In this example, the equation considered in affine coordinates gives the germ in $(\mathbb{C}^3,\bar{0})$, and the same equation considered as homogenous coordinates in $\mathbb{CP}^2$ gives its tangent cone.
\end{ex}

\begin{ex}\label{ex:ce}
Another example is any surface with smooth irreducible tangent cone of degree greater or equal than 4 and no hyperflexes. For every degree this is a Zariski open set of the set of surfaces of that degree. Other examples are given by surfaces whose  tangent cone is the union of two non-tangent conics.
\end{ex}

\begin{prop}\label{prop:L}
A necessary condition for the existence of an irreducible arc-section in $(X,\bar{0})$ is that there exists a line $L$, which may be contained in one irreducible component of $C_{X,\bar{0}}$, that meets any component of $C_{X,\bar{0}}$ that does not contain it (if any) in exactly one point. This point would be an intersection point of all the irreducible components of $C_{X,\bar{0}}$, in case there are more than one.
\end{prop}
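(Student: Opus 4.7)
The plan is to take an arc $\alpha$ realizing an irreducible arc-section and to set $L\subset\CC\PP^n$ to be the line through $p_\pi$ and the tangent direction $[\alpha'(0)]$; equivalently, as observed in the proof of Lemma~\ref{lem:L}, $L$ is the projectivization of the tangent plane of $\pi^{-1}(\alpha(\DD))$ at the origin. I will argue that this $L$ is the candidate line of the statement, replacing it by another line through $p_\pi$ if necessary.

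First I would handle the generic case in which $L$ is not contained in any irreducible component of $C_{X,\bar{0}}$. Then $L\cap C_{X,\bar{0}}$ is a finite set and the hypothesis of Lemma~\ref{lem:L} is satisfied, so the reduced tangent cone of the arc-section coincides with $L\cap C_{X,\bar{0}}$. Because an irreducible plane curve germ has reduced tangent cone supported at a single tangent direction, $L\cap C_{X,\bar{0}}$ must consist of a single point $q$. Since a projective line in $\CC\PP^n$ meets every projective hypersurface for dimensional reasons, every irreducible component of $C_{X,\bar{0}}$ contains $q$; hence $L$ meets each component in exactly the single common point $q$, which is the required intersection point of all the components.

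The harder case is when $L$ is contained in some irreducible component $C_1$ of $C_{X,\bar{0}}$; the remark following Lemma~\ref{lem:L} identifies this as precisely the situation in which its hypothesis fails. In this case I would blow up the origin in $\CC^{n+1}$ and examine $\widetilde X\cap\widetilde P$ locally near $L$: it decomposes as $L$ (embedded in the exceptional divisor because $L\subset C_1$) together with the strict transform of the arc-section. Irreducibility of the arc-section forces its strict transform to meet the exceptional divisor at a single point on $L$. Analyzing local branches of $\widetilde X$ through each point $q\in L\cap C_j$ with $L\not\subset C_j$, one then verifies that either $L$ itself meets each such $C_j$ in a single common point, or else a suitable alternate line through $p_\pi$ (for example a tangent line at $p_\pi$ to a component of $C_{X,\bar{0}}$) can be produced to witness the property.

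The main obstacle is this second case: the intersection $\widetilde X\cap\widetilde P$ acquires the embedded line $L$ in the exceptional divisor, so Lemma~\ref{lem:L} cannot be invoked directly, and the argument must carry out a careful local computation in the blow-up chart to separate this embedded contribution from the genuine strict transform of the arc-section, and thus extract the correct intersection pattern of $L$ with the other components $C_j$.
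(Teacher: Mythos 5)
Your overall strategy coincides with the paper's: Proposition \ref{prop:L} is stated there without a written proof, as a summary of Lemma \ref{lem:L} and the two remarks following it, and your first paragraph (take $L$ to be the line through $p_\pi$ and the tangent direction of the arc, invoke Lemma \ref{lem:L}, use that an irreducible curve germ has a single tangent direction and that a line in $\CC\PP^n$ meets every hypersurface) is exactly that derivation, correctly carried out.

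The gap is in the case where $L$ is contained in a component $C_1$ of $C_{X,\bar{0}}$, which you identify but do not close. Two problems. First, the clause ``or else a suitable alternate line through $p_\pi$ \ldots can be produced to witness the property'' is not an argument: you give no construction and no reason such a line would have the required intersection pattern; in fact no alternate line is needed. Second, the step that actually has to be supplied is that the conclusion of Lemma \ref{lem:L} survives componentwise: for each component $C_j$ with $L\not\subset C_j$ and each point $q\in L\cap C_j$, the arc-section still has a branch with tangent direction $q$. This follows by running the blow-up argument of Lemma \ref{lem:L} locally at $q$ on a local branch $B$ of $\widetilde X$ whose trace on the exceptional divisor $E$ contains a local branch of $C_j$ at $q$: the smooth surface $\widetilde P$ (strict transform of $\pi^{-1}(\alpha(\DD))$) is not contained in $B$, so $\widetilde P\cap B$ is a curve, and that curve is not contained in $E$ because $\widetilde P\cap E=L$ while $L\cap C_j$ is finite near $q$; hence it contributes a branch of the strict transform of the arc-section passing through $q$. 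Irreducibility of the arc-section then forces all such points $q$, over all $j$, to equal its unique tangent direction $q_0$, which lies on $L$ and therefore also on every component containing $L$; so $L$ itself is the witness in both cases. As written, your second case is an assertion of what must be verified rather than a verification, so the proof is incomplete there.
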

%
%
%
%
%
%
%
%
%

When this necessary condition is achieved, that is, in case  there exists such a line $L$, then the starting point to look for irreducible arcs sections is the following:
\begin{itemize}
\item[(i)] consider a projection $\pi$ in the direction of a point $p_\pi\in L$. Since we ask the projection to be proper, we have to require the line $<p_\pi>\subset\CC^{n+1}$ to not be contanined in $(X,\bar{0})$;
\item[(ii)] once the projection is fixed, the only arcs that may lead to irreducible arc sections must have tangent cone equal to $\overline{\pi}(L')$ with $L'$ some line as in Corollary \ref{prop:L} passing through the projection point. In Section \ref{sec:disc}, we give an alternative description of the possible tangent cones in some cases.
\end{itemize}
\begin{remark}\label{rem:clas_Surf}
We can easily summarize, in the surface case, the cases in which such a line $L$ can exist. 
There are 5 cases for $C_{X,\bar{0}}$ with reduced structure, that are (a) a line, (b) a union of lines meeting in a common point $p$, (c) a conic, (d) an irreducible surface of degree $d>2$ that  intersects some tangent line with multiplicity $d$ and  (e) a reducible surface with a common point $p$ of all its components (and such that there is a line through $p$ that is a tangent line of all the components of degree $d\geq 2$ of maximal intersection multiplicity).


We can saee that (d) is never the case if $C_{X,\bar{0}}$ is a generic variety of degree greater or equal than 4. 

\end{remark}

We retake the examples in the introduction, that cover the cases (a)-(b) in the previous remark:

\begin{ex} For the surface $z^d=f(x,y)$ with $ord(f)=m>d$ with $gcd(d,m)=1$. The tangent cone is $z=0$. Then, we can expect irreducible arc sections for any projection transverse to $z=0$ and in any direction. In particular for the projection in the direction of $z$ one can check easily  that $(at,bt)$ gives irreducible arc section whenever the initial form of $f$ evaluated at $(a,b)$ doesn't vanish.
\end{ex}
\begin{ex} In the case of $z^d=f(x,y)$ with $ord(f)=m<d$ and
 $gcd(d,m)=1$, we have that the tangent cone can be in general
given by a union of lines (in $\CC\PP^2$).
So, we can expect irreducible arc sections in any direction for a non-transverse projection. Then, for example, for the projection to the XY-plane, for generic $(a,b)$, it is easy to see that
$(at, bt)$ give irreducible arc sections.
 \end{ex}

Looking at the possibilities (a)-(e) for the case of surfaces in Remark \ref{rem:clas_Surf},  we see that the existence of irreducible arc-sections for the generic projection may only hold in the cases (a)-(c) and cases in $(e)$ where the tangent cone is a product of tangent cones as in $(a)-(c)$ (as for example $C_{X,\bar{0}}$ being a union of a conic and some lines through a given point). A natural question would be wether the existance of an irreducible arc-section in these cases is invariant by changing the generic projection or even by analytical change of coordinates.

\section{Existence of reducible arc-sections with $d$ components} \label{sec:red}

Let  $(X,\bar{0})$ be a surface germ in $(\CC^3,\bar{0})$ and let $\pi:\CC^3\to \CC^2$ be a projection so that $\pi|_X$ is finite. 

\begin{defn} We define transversal monodromies as follows:
\begin{itemize}
\item[(a)] we say that the monodromy of a loop $\delta$ parametrizing the boundary of a small disk transversal to a component of $\Delta$  is the \emph{transversal monodromy} associated to the component of ${\Delta}$ it meets.
\item[(b)]
Given any exceptional component $E_i$ of a composition of  blow ups over the origin of $(\CC^2,0)$, we  can consider any arc $\gamma$ in $\CC^2$ with transversal lifting through $E_i$ not meeting any other component of the total transform of $\Delta$. Given $\DD$ such that $\gamma(\DD)\cap \Delta=\{\bar{0}\}$, the monodromy of $\gamma|_{\partial \DD}$ is called the \emph{transverse monodromy} associated to the divisor $E_i$.
\end{itemize}
\end{defn}

Take an embedded resolution of $(\CC^2,\Delta)$. Let $\Delta^*$ be the total transform of $\Delta$.




Take a normal crossing point in $\Delta^*$. It has two local branches, $F_1$
and $F_2$. Let $\sigma_1$ and $\sigma_2$ be the permutations associated to its transversal monodromies. It is well known that meridians around two smooth curves as $F_1$ and $F_2$ meeting transversaly (that is, loops bounding small transversal discs to $F_i$) commute. Since the monodromy is a group morphism, $\sigma_1$ and $\sigma_2$ must commute too.

Now, if we make an extra blowup
at this point, a new divisor $F_3$ will appear, whose permutation corresponding to the transversal monodromy will be the composition of $\sigma_1$ and $\sigma_2$ (the order doesn't really matter, since they commute).

By successive blowing ups we can obtain divisors whose corresponding transversal permutation
is $\sigma_1^n\cdot \sigma_2^m$ for every positive integers $n,\ m$. When $n$ and $m$ are multiples
of the order of $\sigma_1$ and $\sigma_2$, we will obtain a divisor with trivial permutation.

So we have proven the following:
\begin{theorem}
\label{thm:existfullyreducedsection}
Let $\pi:\CC^3\to \CC^2$ be a linear projection such that $\pi|_{(X,\bar{0})}$ is finite. There always exist an arc $\gamma$ for which the curve $(X,\bar{0})\cap \pi^{-1}(\gamma(\DD))$
has as many irreducible components as the degree of the cover $\pi|_X$.
\end{theorem}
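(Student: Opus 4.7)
The plan is to exploit Lemma \ref{lema-equiv-irred-monod}: revisiting its proof, the number of irreducible components of $\pi^{-1}(\gamma(\DD))\cap X$ equals the number of orbits (cycles) of the monodromy permutation associated to $\gamma|_{\partial\DD}$. Producing $d$ components therefore reduces to exhibiting an arc whose monodromy is the identity of $Sym(d)$. The strategy is to find, among the transversal monodromies of exceptional divisors in a suitable tower of blowups over $\bar 0 \in \CC^2$, one that equals the trivial permutation.

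First, I would fix an embedded resolution of the discriminant $\Delta \subset \CC^2$, so that the total transform $\Delta^*$ is a normal crossings divisor. Pick a normal crossing point $p$ of $\Delta^*$, let $F_1, F_2$ be the two branches meeting there, and denote by $\sigma_1, \sigma_2 \in Sym(d)$ their transversal monodromies. If the resolution produces no such point, which happens only when $\Delta$ is a single smooth branch, one extra blowup of $\bar 0$ creates one; the case $\Delta=\emptyset$ forces $d=1$ and is trivial. Since meridians around two smooth divisors meeting transversally commute in the fundamental group of the complement, $\sigma_1$ and $\sigma_2$ commute.

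The central step is to compute the transversal monodromy after a further blowup at $p$. The new exceptional divisor $F_3$ admits a meridian that projects downstairs to a small loop around $p$ in $\CC^2\setminus\Delta$, which is freely homotopic to the concatenation of meridians around $F_1$ and $F_2$; hence the transversal monodromy of $F_3$ is $\sigma_1\sigma_2$. Iterating at the normal crossing $F_3\cap F_1$ produces a divisor with transversal monodromy $\sigma_1^2\sigma_2$, and an obvious induction yields exceptional divisors whose transversal monodromies realize $\sigma_1^n \sigma_2^m$ for every pair of positive integers $n, m$. Choosing $n$ and $m$ to be multiples of the orders of $\sigma_1$ and $\sigma_2$ gives an exceptional divisor $E$ with trivial transversal monodromy.

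To finish, I take any arc $\gamma$ whose strict transform in this tower of blowups passes transversally through a generic point of $E$ and meets no other component of the total transform. The monodromy of $\gamma|_{\partial\DD}$ is then the identity, so the covering $\pi^{-1}(\gamma(\DD\setminus\{0\}))\cap X \to \gamma(\DD\setminus\{0\})$ splits as $d$ disjoint sheets, producing an arc-section with exactly $d$ irreducible components. The main technical obstacle I expect is the explicit monodromy computation for a newly created exceptional divisor; once it is established that the meridian of $F_3$ factors as the product of meridians of $F_1$ and $F_2$, the rest is combinatorics.
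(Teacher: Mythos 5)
Your proposal is correct and follows essentially the same route as the paper: both locate a normal crossing of the total transform of $\Delta$, use that the two transversal monodromies $\sigma_1,\sigma_2$ commute, blow up repeatedly to manufacture an exceptional divisor with transversal monodromy $\sigma_1^n\sigma_2^m$ trivial, and take $\gamma$ to be the image of a smooth arc transverse to that divisor. Your added handling of the degenerate cases ($\Delta$ empty or a single smooth branch) and the explicit justification that the meridian of the new divisor is the product of the meridians of the two crossing branches are welcome refinements of details the paper leaves implicit, but they do not change the argument.
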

\begin{proof}
 Just take $\gamma$ to be the contraction of a smooth arc transverse to a divisor
 with trivial permutation.
\end{proof}

\begin{remark}
A divisor with trivial permutation can always be obtained by blowing up a generic
 point in any component of the exceptional divisor and then keep blowing up the point at the normal crossing of the divisors that appears. In that case the sequence of permutations that
 appear are consecutive powers of the original one.

This implies that there are arcs with totally reducible arc-sections with
arbitrary tangent cone.
\end{remark}

\begin{ex} For the case $z^d=f(x,y)$, if $x=0$ is a generic line for $f(x,y)=0$ (i.e. not tangent to it), for the projection to the $XY$- plane, the arc $(t^{d+1},t^d)$ gives a reducible arc-section.  In this case this is easy to check since $z^d-f(t^{d+1},t^d)=0$ has tangent cone $z^d-t^{kd}=0$ with $k$ the multiplicity of $f(x,y)=0$, which factorizes in $d$ factors.
\end{ex}


\subsection{The generic arc-section}\label{sec:gen}
Let  $(X,\bar{0})$ be a surface germ in $(\CC^3,\bar{0})$. For a given projection $\pi:\CC^3\to \CC^2$ with $\pi|_{X,\bar{0}}$ finite we will say that a \textit{generic arc} is a smooth arc not tangent to the discriminant $\Delta$ of $\pi|_{X,\bar{0}}$.

All generic arcs give the same associated monodromy. Moreover, this is the case for any projection in a direction of the open set $\Omega\subset Grass_1(\CC^3)$ of directions that are Zariski equisingularity. This is because we can deform one generic arc to the other. Then, one is transitive if and only if all of them are.

Given a singular arc transverse to the discriminant, then its monodromy is a power of the monodromy of the generic smooth arc by arguments in Section \ref{sec:red}. Then, if a singular arc gives rise to an irreducible arc section, its monodromy must be transtive and since it is the power of the monodromy of the generic arc, the monodromy of the generic arc should be also transitive.  That is:


\begin{cor}\label{cor:gen1}
If an arc not tangent to $\Delta$ gives an irreducible arc-section, then all smooth arcs not tangent to the discriminant do.
\end{cor}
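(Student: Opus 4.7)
My plan is to translate the statement into monodromy language via Lemma~\ref{lema-equiv-irred-monod}, and then exploit two structural facts already indicated in the paragraphs preceding the corollary: (i) the monodromies of all smooth arcs not tangent to $\Delta$ are pairwise conjugate in $Sym(d)$, and (ii) the monodromy of any arc not tangent to $\Delta$ is conjugate to a power of the monodromy of a smooth non-tangent arc of the same tangent direction. Once both facts are in hand, the conclusion follows from a purely group-theoretic remark about $d$-cycles.

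For fact (i), I would fix a smooth arc $\alpha$ not tangent to $\Delta$, with monodromy $\sigma \in Sym(d)$, and connect $\alpha$ to any other smooth non-tangent arc $\alpha'$ by a continuous family of smooth non-tangent arcs: the tangent cone of $\Delta$ at $\bar 0$ is a finite set of points in $\CC\PP^1$, so the non-tangent directions form a connected Zariski open set. First one scales away the higher-order terms of $\alpha$ and $\alpha'$, and then one rotates the tangent direction through this connected open set. The associated family of boundary loops $\alpha_s(\partial \DD)$ is a free homotopy in $\CC^2 \setminus \Delta$, so the monodromy of $\alpha'$ is conjugate to $\sigma$.

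For fact (ii), let $\gamma$ be the given arc not tangent to $\Delta$ with multiplicity $k \ge 1$ at $0$, and write $\gamma(t) = (at^k, bt^k) + O(t^{k+1})$ with $[a:b]$ outside the tangent cone of $\Delta$. I would deform $\gamma$ through arcs of the same tangent direction to its leading part $t \mapsto (at^k, bt^k)$, which factors as the $k$-fold cover $t \mapsto t^k$ followed by the smooth arc $s \mapsto (as, bs)$. For $\DD$ sufficiently small, the boundary loops of the intermediate arcs stay in an arbitrarily small conic neighbourhood of the line $\{(as,bs):s\in\CC\}\setminus\{\bar 0\}$, which meets $\Delta$ only at $\bar 0$, so the deformation lives in $\CC^2\setminus\Delta$. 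Hence $\gamma(\partial \DD)$ is free-homotopic in $\CC^2 \setminus \Delta$ to the $k$-th iterate of a smooth-arc boundary loop, and its monodromy $\tau$ is conjugate in $Sym(d)$ to $\sigma^k$, in agreement with the general argument of Section~\ref{sec:red}.

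Putting these together, the hypothesis on $\gamma$ together with Lemma~\ref{lema-equiv-irred-monod} gives that $\tau$ is a $d$-cycle, i.e.\ transitive. A $d$-cycle cannot be a power $\sigma^k$ unless $\sigma$ is itself a $d$-cycle, because if $\sigma$ had cycle type $(\ell_1,\ldots,\ell_r)$ with $r\ge 2$, each $\sigma^k$-orbit would be contained in one of the $\sigma$-orbits, contradicting transitivity. Hence $\sigma$ is a $d$-cycle, and fact (i) then forces the monodromy of every smooth non-tangent arc to be a $d$-cycle, which by Lemma~\ref{lema-equiv-irred-monod} yields irreducibility of the corresponding arc-section. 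The only step that deserves a careful write-up is the free-homotopy claim in paragraph three, namely the verification that the straight-line deformation of $\gamma$ to its leading monomial stays off $\Delta$ on the boundary of a small enough $\DD$; this is exactly where the non-tangency hypothesis and the finiteness of the tangent cone of $\Delta$ are used.
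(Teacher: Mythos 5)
Your proposal is correct and follows essentially the same route as the paper: both reduce to monodromy via Lemma~\ref{lema-equiv-irred-monod}, note that all generic smooth arcs have conjugate monodromy $\sigma$ while a non-tangent arc of multiplicity $k$ has monodromy conjugate to $\sigma^k$, and conclude from the fact that a power of $\sigma$ can be a $d$-cycle only if $\sigma$ itself is. The only (cosmetic) difference is that you justify the power relation by a direct homotopy to the leading monomial, whereas the paper invokes the blowup arguments of Section~\ref{sec:red}.
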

\begin{cor}\label{cor:gen2} If the generic arc gives an irreducible arc section, then either the tangent cone $C_{X,\bar{0}}$ is a line (with certain multiplicity) or the tangent cone is a union of lines and we are projecting from the singular point of $C_{X,\bar{0}}$.
\end{cor}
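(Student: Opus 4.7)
The strategy is to apply Lemma~\ref{lem:L} to a generic arc and read off the resulting intersection condition on $C_{X,\bar{0}}\subset\CC\PP^2$. Let $\gamma$ be a smooth arc not tangent to $\Delta$ whose arc-section is irreducible, with tangent direction $v\in\CC\PP^1$. Then $L:=\overline{\pi}^{-1}(v)\cup\{p_\pi\}\subset\CC\PP^2$ is a line through $p_\pi$; since $v$ is generic, $L$ is a generic line through $p_\pi$, and in particular $L\not\subset C_{X,\bar{0}}$. Lemma~\ref{lem:L} then identifies the reduced tangent cone of the arc-section with $L\cap C_{X,\bar{0}}$, and irreducibility (so a single tangent line) forces $|L\cap C_{X,\bar{0}}|=1$.

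I now split into two cases according to whether $p_\pi$ lies on $C_{X,\bar{0}}$. If $p_\pi\notin C_{X,\bar{0}}$, then by Bezout and Bertini the generic line through $p_\pi$ meets $C_{X,\bar{0}}$ in $\deg(C_{X,\bar{0}})$ distinct points, so the condition forces $\deg(C_{X,\bar{0}})=1$, i.e.\ $C_{X,\bar{0}}$ is a line. If $p_\pi\in C_{X,\bar{0}}$, then every irreducible component $D$ of $C_{X,\bar{0}}$ must contain $p_\pi$: otherwise a generic $L$ would meet $D$ at a point different from $p_\pi$ and produce a second intersection point of $L\cap C_{X,\bar{0}}$. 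For such a component $D$ of degree $d$ with $m:=\mathrm{mult}_{p_\pi}(D)$, the intersection multiplicity of $L$ with $D$ at $p_\pi$ equals $m$ for generic $L$, so Bezout leaves $d-m$ further simple intersections off $p_\pi$; the condition $|L\cap C_{X,\bar{0}}|=1$ then forces $d=m$.

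It remains to observe that an irreducible plane curve $D$ with $\deg(D)=\mathrm{mult}_{p_\pi}(D)$ must be a line through $p_\pi$: indeed $d=m$ means that every generic line through $p_\pi$ meets $D$ set-theoretically only at $p_\pi$, so the projection $\overline{\pi}\colon D\setminus\{p_\pi\}\to\CC\PP^1$ has image of dimension $0$; by irreducibility of $D$ this puts $D\setminus\{p_\pi\}$ inside a single line through $p_\pi$, and so $D$ equals that line. Hence $C_{X,\bar{0}}$ is a union of lines through $p_\pi$, and whenever more than one component appears, $p_\pi$ is their common and therefore singular point, as claimed. The only delicate ingredient is this last Bezout-plus-projection argument; the rest is a direct packaging of Lemma~\ref{lem:L}.
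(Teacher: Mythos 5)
Your proof is correct and follows essentially the same route as the paper: both reduce the statement to Lemma~\ref{lem:L} applied to the line in $\CC\PP^2$ spanned by $p_\pi$ and the (generic) tangent direction of the arc, and then ask when a generic such line can meet $C_{X,\bar{0}}$ in a single point. The paper disposes of this by citing the case list of Remark~\ref{rem:clas_Surf}, whereas you supply the underlying Bezout argument directly via the clean dichotomy $p_\pi\notin C_{X,\bar{0}}$ versus $p_\pi\in C_{X,\bar{0}}$ (forcing every component to be a line through $p_\pi$); this fills in details the paper leaves implicit but is not a different method.
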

\begin{proof} Looking at the cases $(a)-(e)$ in Remark~\ref{rem:clas_Surf} we see that these two cases are the only ones that allow irreducible arc sections in any generic direction.
\end{proof}

\begin{cor}\label{cor:gen3}
If $C_{X,\bar{0}}$ is not a line or a union of lines and there is an irreducible arc-section over an arc $\gamma$, then the arc $\gamma$ is tangent to $\Delta$.
\end{cor}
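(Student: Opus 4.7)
The plan is to deduce Corollary~\ref{cor:gen3} as a direct contrapositive chaining of Corollaries~\ref{cor:gen1} and~\ref{cor:gen2}. I would argue by contradiction: suppose $\gamma$ is an arc that yields an irreducible arc-section but, contrary to the claim, is not tangent to $\Delta$. Then Corollary~\ref{cor:gen1} applies to $\gamma$ (its hypothesis covers any arc, smooth or singular, that is not tangent to $\Delta$) and yields that every smooth arc not tangent to $\Delta$ produces an irreducible arc-section. In particular, the generic arc does.

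Feeding this into Corollary~\ref{cor:gen2}, the tangent cone $C_{X,\bar{0}}$ must then be either a line (possibly with multiplicity) or a union of lines through the projection point. This contradicts the standing hypothesis of Corollary~\ref{cor:gen3} that $C_{X,\bar{0}}$ is neither of these, so the assumption fails and $\gamma$ must be tangent to $\Delta$.

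The only subtlety I anticipate is making sure that the notion of ``not tangent to $\Delta$'' used in Corollary~\ref{cor:gen1} agrees with the one used for the $\gamma$ in the statement of Corollary~\ref{cor:gen3}, since $\gamma$ is a priori allowed to be singular. As the proof of Corollary~\ref{cor:gen1} already handles singular arcs transverse to $\Delta$ via the monodromy-power argument of Section~\ref{sec:red} (the monodromy of any such arc being a power of the generic one, transitivity transfers in both directions), no extra work is required and the contrapositive step is legitimate.
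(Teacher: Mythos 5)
Your proof is correct and matches the paper's intended argument: the paper leaves Corollary~\ref{cor:gen3} without an explicit proof precisely because it follows by the contrapositive chaining of Corollaries~\ref{cor:gen1} and~\ref{cor:gen2} that you spell out, and your remark about singular arcs transverse to $\Delta$ is exactly the monodromy-power observation the paper makes just before Corollary~\ref{cor:gen1}.
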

%
%
%
%
%
%

\section{Discriminant of the tangent cone coincides with the tangent cone of the discriminant}\label{sec:disc}
After Corollary \ref{cor:gen3} we are interested in the tangent cone of $\Delta$ in order to know the tangent line of an arc with irreducible-arc section.

Let $(X,\bar{0})$ be a surface in $(\CC^3,0)$. We want to prove that when projecting in the direction of a point in a line $L$ as in the hypothesis of Lemma \ref{lem:L}, we can know in advance the only possible tangent line for an arc with irreducible arc-section.

Following \cite{Tei}, we consider the Nash modification $\widetilde{X}$ that is the closure in $\CC^3\times Grass_2(\CC^3)\approx \CC^3\times \check{\CC\PP^2}$ of $\{(x,T_xX): x\in X\}$. The projection of $\widetilde{X}$ onto the first factor is called the Nash blowup $$\calN: \widetilde{X}\to X$$ and the one onto the second, the gauss map
$$\widetilde{\calG}:\widetilde{X}\to \check{\CC\PP^2}.$$

We recall that $\calN^{-1}(0)\subset \{0\}\times \check{\CC\PP^2}$ is the union of $\check{C}_{X,\bar{0}}$, that denotes the dual curve of the tangent cone $C_{X,\bar{0}}\subset \CC\PP^2$, and some lines that correspond to  pencils of planes passing through the exceptional tangent lines.

Every point in $\calN^{-1}(0)\subset \{0\}\times \check{\CC\PP^2}$ corresponds to a hyperplane in $(\CC^3,0)$, limit of tangent hyperplanes to $(X,\bar{0})$. In particular, a point in the dual of the tangent cone, represents the plane in $\CC^3$ induced by the tangent line to its dual point in $C_{X,\bar{0}}\subset \CC\PP^2$.

\begin{prop}\label{prop:disc} Assume the projection direction is not an exceptional tangent. Then, the tangent of the discriminant  $ \Delta$ coincides with the discriminant of $\overline{\pi}|_{C_{X,\bar{0}}}$.
\end{prop}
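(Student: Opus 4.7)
The plan is to analyze the polar curve $P = \{F = F_z = 0\} \subset X$ and its image $\Delta = \pi(P)$ via the Nash modification recalled in the section. I would choose coordinates so that $\pi(x,y,z) = (x,y)$, $p_\pi = [0:0:1]$, and $F$ is a Weierstrass polynomial in $z$ of degree $d$ whose initial form $F_m$ of total degree $m = \mathrm{mult}(X,\bar{0})$ defines $C_{X,\bar{0}} \subset \CC\PP^2$. Then $\Delta$ is cut out by $D(x,y) := \mathrm{Disc}_z F$, while the discriminant of $\overline{\pi}|_{C_{X,\bar{0}}}$ in $\CC\PP^1$ is cut out by $\mathrm{Disc}_z F_m$ (interpreted as a polynomial in $z$ of its actual $z$-degree). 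The claim thus reduces to equating the projectivized initial form of $D$ with $\mathrm{Disc}_z F_m$.

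Geometrically, at every point $q \in P\setminus\{\bar{0}\}$ the tangent plane $T_q X$ contains $p_\pi$, so the strict transform $\widetilde{P} \subset \widetilde{X}$ lies in $\widetilde{\calG}^{-1}(L_{p_\pi})$, where $L_{p_\pi} \subset \check{\CC\PP^2}$ is the line of planes through $p_\pi$. Hence $\widetilde{P} \cap \calN^{-1}(\bar{0})$ is the set of limits of tangent planes along the branches of $P$, and lies in $L_{p_\pi} \cap \calN^{-1}(\bar{0})$. The central input is that, under the hypothesis that $p_\pi$ is not an exceptional tangent, these limits all land in $\check{C}_{X,\bar{0}}$ rather than in the exceptional pencils, so the branches of $P$ acquire tangent directions at points $q \in C_{X,\bar{0}}$ whose tangent line $T_q C_{X,\bar{0}}$ passes through $p_\pi$---i.e., at the ramification of $\overline{\pi}|_{C_{X,\bar{0}}}$. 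Consequently the tangent cone of $P$ is supported on the polar of $C_{X,\bar{0}}$ with respect to $p_\pi$, and applying $\overline{\pi}$ one identifies the tangent cone of $\Delta$ with the discriminant of $\overline{\pi}|_{C_{X,\bar{0}}}$. This last step also requires that no branch of $P$ is tangent to the direction $p_\pi$, which follows from properness of $\pi|_X$ combined with the non-exceptional-tangent hypothesis.

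The main obstacle is precisely the identification $\widetilde{P} \cap \calN^{-1}(\bar{0}) \subset \check{C}_{X,\bar{0}}$: a priori, $L_{p_\pi}$ meets each exceptional pencil in an ``accidental'' point, and one must rule out that such a point arises as the limit of tangent planes along a branch of $P$. This is where the hypothesis enters decisively. If the geometric argument becomes unwieldy, I would run it in parallel with an algebraic scaling computation: for $G(x,y,z,t) := t^{-m} F(tx,ty,tz) = F_m + tF_{m+1}(x,y,z) + \cdots$ holomorphic in $t$, the $z$-discriminant $\mathrm{Disc}_z G$ specializes at $t = 0$ to $\mathrm{Disc}_z F_m$, while the scaling behavior of $\mathrm{Disc}_z$ under $(x,y,z) \mapsto (tx,ty,tz)$ and multiplication by $t^{-m}$ identifies that same specialization with the initial form of $D(x,y)$---the scaling weights conspire to give a nonzero equality precisely when $p_\pi$ is not an exceptional tangent.
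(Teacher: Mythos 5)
Your main argument is essentially the paper's own proof: both identify the polar curve with $\calN(\calG^{-1}(T_{\pi}))$ for the pencil line $T_\pi=L_{p_\pi}\subset\check{\CC\PP^2}$, use the hypothesis that $p_\pi$ is not an exceptional tangent to force the limit tangent planes along the polar branches into $\check{C}_{X,\bar{0}}$, and then match $T_\pi\cap\check{C}_{X,\bar{0}}$ with the ramification of $\overline{\pi}|_{C_{X,\bar{0}}}$ to read off the tangent cone of $\Delta=\pi(\Gamma)$. The auxiliary scaling computation you sketch at the end is not in the paper, but the core route is the same.
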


\begin{proof}
The projection direction $p_\pi$ gives a line $T_\pi$ in $\check{\CC\PP^2}\approx\CC\PP^2$ that corresponds to the pencil of hyperplanes that contains the direction.

The set $\calN(\calG^{-1}(T_\pi))$ gives the polar curve $\Gamma$ of the projection. Since $T_\pi$ is not the pencil of an exceptional tangent, we have that  $\calN(\calG^{-1}(T_\pi))$ has one dimensional component in $(X,\bar{0})$. This is because $T_\pi$ is not the pencil of an exceptional tangent line and because $\calG^{-1}(T_\pi)$ is defined by $1$ equation in a surface $\widetilde{X}$ and $\calG^{-1}(T_\pi)$ is not contained in $\calN^{-1}(0)$.

It is immediate that whenever we have a point in the discriminant of $\bar{\pi}$, that is, whenever there exists $q\in C_{X,\bar{0}}$ such that $\overline{qp}_\pi$ is a line tangent to $C_{X,\bar{0}}$ at $q$, we have an intersection point of $T_\pi\cap \check{C}_{X,\bar{0}}$, that is a limit tangent hyperplane to $(X,\bar{0})$ that contains $<p_\pi>$. And reciprocally, an intersection point of $T_\pi\cap \check{C}_{X,\bar{0}}$ gives a line tangent to $C_{X,\bar{0}}$ passing by $p_\pi$.


Finally, the tangent line at the origin of every branch of the discriminant $\pi(\Gamma)$ is not zero and is given by limits of tangent lines out of the origin. These lines are the image by $\overline{\pi}$ of the tangent lines of the polar curve, which are contained in the hyperplanes corresponding to points of $T_\pi$. Then, since the tangent line is not zero, it has to be in the discriminant or $\overline{\pi}$.

\end{proof}

\begin{cor}\label{cor:disc} Assume all the points on $C_{X,\bar{0}}$ whose tangent line is as in Lemma \ref{lem:L}, are not exceptional tangent lines.

Assume $C_{X,\bar{0}}$ is not a line or a union of lines.

Then, any arc with irreducible arc-section has as tangent line a point in the discriminant of $\overline{\pi}|_{C_{X,\bar{0}}}$.
\end{cor}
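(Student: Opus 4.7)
My plan is to obtain Corollary \ref{cor:disc} as a direct synthesis of two results already established in the preceding sections: Corollary \ref{cor:gen3} and Proposition \ref{prop:disc}. Since $C_{X,\bar{0}}$ is assumed to be neither a line nor a union of lines, the first step is to apply Corollary \ref{cor:gen3}, which forces any arc $\gamma$ giving rise to an irreducible arc-section to be tangent to the discriminant $\Delta$ at the origin. In particular the tangent direction of $\gamma$ at $\bar{0}$ is a point of the reduced tangent cone of $\Delta$ inside $\CC\PP^1$.

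The second step is to rewrite this tangent cone in terms of $C_{X,\bar{0}}$ itself by invoking Proposition \ref{prop:disc}, which equates the tangent cone of $\Delta$ with the discriminant of $\overline{\pi}|_{C_{X,\bar{0}}}$, provided the projection direction $p_\pi$ is not an exceptional tangent line of $(X,\bar{0})$. Before calling on the proposition I would verify this non-degeneracy from the stated hypothesis of the corollary: the assumption that no tangent line to $C_{X,\bar{0}}$ at a point corresponding to Lemma \ref{lem:L} is an exceptional tangent entails in particular that $p_\pi$ itself is not such a line, since were $p_\pi$ a point of $C_{X,\bar{0}}$ with exceptional tangent direction, the corresponding tangent line at $p_\pi$ would trivially pass through $p_\pi$ and therefore be exactly of the forbidden type, contradicting the hypothesis.

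Concatenating the two steps then yields the claim: the tangent of $\gamma$ lies in the tangent cone of $\Delta$ by the first step, and this tangent cone coincides with the discriminant of $\overline{\pi}|_{C_{X,\bar{0}}}$ by the second, so the tangent of $\gamma$ is a point of that discriminant. The main (mild) obstacle is precisely the bookkeeping needed to verify that the hypothesis on tangent lines to $C_{X,\bar{0}}$ translates into the non-exceptionality hypothesis of Proposition \ref{prop:disc}; once that translation is in place the deduction itself is one or two lines, because the substantive content has already been carried out in the proofs of Corollary \ref{cor:gen3} and Proposition \ref{prop:disc}.
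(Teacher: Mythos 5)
Your proof is correct and follows exactly the route the paper intends: Corollary \ref{cor:gen3} forces an arc with irreducible arc-section to be tangent to $\Delta$, and Proposition \ref{prop:disc} identifies the tangent cone of $\Delta$ with the discriminant of $\overline{\pi}|_{C_{X,\bar{0}}}$ --- the paper states the corollary without a written proof precisely because it is this concatenation. The only point needing care is your translation of the corollary's hypothesis into the non-exceptionality assumption of Proposition \ref{prop:disc}: when $p_\pi\notin C_{X,\bar{0}}$ the projection direction is automatically not an exceptional tangent (exceptional tangents are lines contained in the tangent cone), and your argument covers the remaining case $p_\pi\in C_{X,\bar{0}}$, so the deduction goes through.
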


Examples where the hypothesis of Corollary \ref{cor:disc} are satisfied are any surface with $C_{X,\bar{0}}$ smooth, since it is known that there is no exceptional tangents in this case (see (2.2.1) in \cite{Tei}).






\section{Algorithm to find an irreducible arc-section.}\label{sec:alg}
Let  $(X,\bar{0})$ be a surface germ in $(\CC^3,\bar{0})$.
Based on the previous sections, we get a general method to determine if a given
surface germ projection $\pi:(X,\bar{0})\to(\CC^2,0)$ admits irreducible arc sections or not.

First of all, we find the discriminant $ \Delta$.
The following steps are:
\begin{enumerate}

\item Compute an embedded resolution of singularities of $\Delta$,
$\phi:(\widetilde\CC^2,E)\to(\CC^2,\bar{0})$.
Applying the same sequence of blowups, we get a map $\widetilde{\phi}:(\widetilde X,\bar{E})\to(X,\bar{0})$.
\item\label{itempardermuts} For each normal crossing along the total transform of $\Delta$, that is, for every intersection of two branches
$B_1,B_2$ in $\phi^{-1}(\Delta)$, get two small smooth disks $D_1,D_2$ such that:
\begin{itemize}
\item $D_i$ is centered in a point of $B_i$, is transversal to it, and touches no other component of
$\phi^{-1}(\Delta)$
\item There exists a point $q$ in the intersection of the boundaries
$\partial D_1\cap \partial D_2$
\item Both disks are inside a small polydisc centered in the normal crossing
point, that meets no other component of $\phi^{-1}(\Delta)$.
\end{itemize}
 \item Consider $\pi\circ\widetilde\phi$ restricted to the preimage of
 $\partial D_1$ and   $\partial D_2$ as in \ref{itempardermuts}, and compute the permutations $P_1$ and $P_2$ of the corresponding monodromy based in $q$.
 \item The projection $\pi$ allows an irreducible arc section if and only if for some
 $P_1,P_2$ as before, the group generated by them contains a transitive element.

\end{enumerate}

\begin{figure}
\begin{center}
\begin{tikzpicture}[samples=800,scale=7]

 \draw[domain=-0.4:0.4,thick] plot ({\x * \x * \x},{\x * \x}) node[above] {$(y^3-x^2)(y^3+x^2)$};
 \draw[domain=-0.4:0.4,thick] plot ({  \x * \x * \x},{-\x * \x});

 \draw[-latex,ultra thick] (0,-0.5) -- (0,-0.3) node[midway,right]  {$(x,y)\mapsto (xy,x)$};

 \begin{scope}[yshift=-0.8cm]
 \draw[domain=-0.1:0.1,thick] plot ({\x },{10*\x * \x}) node[above] {$y^4(y-x^2)(y+x^2)$};
 \draw[domain=-0.1:0.1,thick] plot ({  \x},{-10*\x * \x});
 \draw[domain=-0.1:0.1,very thick] plot({\x},{0});
 \end{scope}

 \draw[-latex,ultra thick] (0,-1.2) -- (0,-1) node[midway,right]  {$(x,y)\mapsto (x,xy)$};

 \begin{scope}[yshift=-1.5cm]
 \draw[domain=-0.1:0.1,thick] plot ({\x },{\x}) node[above] {$y^4x^6(y-x)(y+x)$};
 \draw[domain=-0.1:0.1,thick] plot ({  \x},{- \x});
 \draw[domain=-0.1:0.1, thick] plot({\x},{0});
 \draw[domain=-0.1:0.1,very thick] plot({0},{\x});
 \end{scope}

 \draw[-latex,ultra thick] (0.35,-1.9) -- (0.2,-1.7) node[midway,right]  {$(x,y)\mapsto (x,xy)$};

 \begin{scope}[xshift=0.5cm, yshift=-2.2cm]
 \draw[domain=-0.1:0.1,thick] plot ({\x },{0.05}) node[above] {$y^4x^{12}(y-1)(y+1)$};
 \draw[domain=-0.1:0.1,thick] plot ({  \x},{- 0.05});
 \draw[domain=-0.1:0.1, thick] plot({\x},{0});
 \draw[domain=-0.1:0.1,very thick] plot({0},{\x});
 \end{scope}

 \draw[-latex,ultra thick] (-0.35,-1.9) -- (-0.2,-1.7) node[midway,left]  {$(x,y)\mapsto (xy,y)$};

 \begin{scope}[xshift=-0.5cm, yshift=-2.2cm]
 \draw[domain=-0.1:0.1,thick] plot ({0.05},{\x}) node[above] {$y^{12}x^{6}(1-x)(1+x)$};
 \draw[domain=-0.1:0.1,thick] plot ({ -0.05},{\x});
 \draw[domain=-0.1:0.1,very thick] plot({\x},{0});
 \draw[domain=-0.1:0.1, thick] plot({0},{\x});
 \end{scope}

 \end{tikzpicture}

 \caption{\label{fig:blowups}Sequence of blowups to resolve the singularities of the discriminant.}
 \end{center}
 \end{figure}
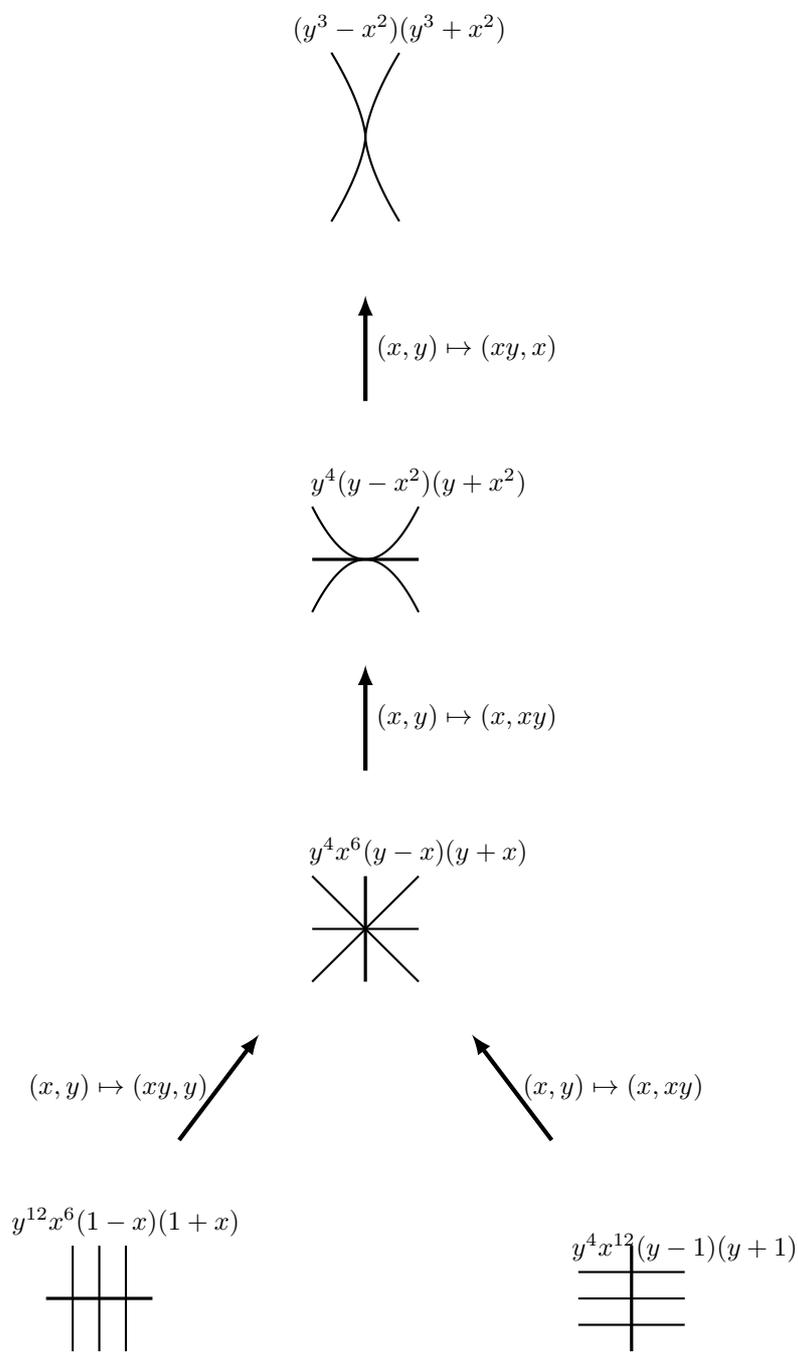

\begin{ex}
\label{ex:method}
Consider the surface germ given by $z^4-4xz+3y^2=0$, projected along the $z$
direction. Note that it is an isolated singularity, and hence it must be irreducible. Its discriminant is given by the equation $(y^3-x^2)(y^3+x^2)=0$.

We compute a resolution of singularities by the sequence of blowups in figure~\ref{fig:blowups}.

We get three exceptional divisors and two branches. Let's focus for instance in
the normal crossing at the origin of the last chart. The two discs $B_1,B_2$ can
be taken as $x=\frac{1}{2},\|y\|\leq \frac{1}{2}$ and $\|x\|\leq\frac{1}{2},y = \frac{1}{2}$ respectively, so the base point $q$ would be $(\frac{1}{2},\frac{1}{2})$. The composition of the blow-up transformations applied to
the equation of $X$ is $z^4-4(x^2y^3)z+3(xy^2)^2=0$.

To compute the permutation around the boundary of the disk, we parametrize the disk as $(\frac{1}{2},\frac{t}
{2})$ and substitute, obtaining $z^4-\frac{1}{8}t^3z+\frac{3}{64}t^4$. This is a
squarefree homogenous equation in two variables, so it factors as the product of
four linear forms. That is, the arc section corresponding to this arc is a union
of four lines, and the corresponding monodromy has a trivial permutation.

The other disk can be parametrized as $(\frac{t}{2},\frac{1}{2})$ and after composing, we get the equation $z^4 - \frac{1}{8}t^2z + \frac{3}{64}t^2=0$. The
braid monodromy of this curve around the boundary of the unit disk is given by the braid shown in Fig~\ref{fig:braid}.
\begin{figure}[ht]
\begin{center}
\begin{tikzpicture}[samples=800,scale=0.6]

\def\sigma{

    \draw[thick] (0,0) to [out=90,in=270] (1,1);
    \draw[thick] (1,0) to [out=90,in=325] (0.7,0.4);
    \draw[thick] (0.3,0.6) to [out=145,in=270] (0,1);
    }

\def\s0s2s1{
    \sigma
    \begin{scope}[xshift=2cm]
    \sigma
    \end{scope}

    \begin{scope}[xshift=1cm,yshift=1cm]
    \sigma
    \end{scope}
    \draw[thick] (0,1) to (0,2);
    \draw[thick] (3,1) to (3,2);
    }
\s0s2s1
\begin{scope}[yshift=2cm]
\s0s2s1
\end{scope}

 \end{tikzpicture}
\end{center}
\caption{The braid $(\sigma_0\sigma_2\sigma_1)^2$\label{fig:braid}}
\end{figure}
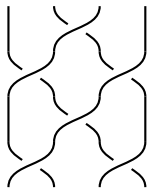

This braid can be expressed in the Artin presentation (see for instance~\cite{artin}) as $(\sigma_0\sigma_2\sigma_1)^2$. Its permutation is $(1,4)(2,3)$. In this case,
the group generated by these two permutations only has two elements, none of which
is transitive.

We can repeat these computations with the rest of the crossings. In this case, one
of the permutations will always be trivial (since they correspond to the first
divisor we have computed). Similar computations show that the rest of the
permutations we obtain are $(3,4)$, $(3,4)$ and $(1,4,2)$ respectively. None of
them are transitive and the group generated by them and the trivial permutation
will not contain any transitive element. That is: in this case, there is no
irreducible arc section.

\end{ex}

In fact, not every possible branch must be checked. It is enough to study the branches $\Delta_i$ of $\Delta$ whose tangent line is contained in a line $L$ as in Lemma \ref{lem:L}. Moreover, we have a necessary condition:
\begin{lemma}
\label{lemma:deformationtoarc}
Consider an arc that can be deformed to a branch $\Delta_i$ of the discriminant without
intersecting the discriminant outside $\Delta_i$, and whose corresponding arc-section is irreducible, then a parametrizaton of $\Delta_i$ also has an irreducible arc-section.
\end{lemma}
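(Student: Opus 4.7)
The plan is to translate irreducibility of arc-sections into transitivity of a monodromy permutation via Lemma~\ref{lema-equiv-irred-monod}, analyse the boundary loop of $\alpha$ inside a tubular neighbourhood of $\Delta_i$ where the monodromy factors as a product of two commuting ``parallel'' and ``transversal'' permutations, and then use the elementary fact that transitivity is preserved under quotienting by a commuting permutation.

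First, I fix a deformation $\{\alpha_s\}_{s\in[0,1]}$ with $\alpha_0=\alpha$, $\alpha_1=\beta$ an injective parametrization of $\Delta_i$, and $\alpha_s(\DD)\cap\Delta=\{\bar{0}\}$ for every $s<1$. Shrinking $\DD$ so that $\beta(\partial\DD)$ sits in the smooth part of $\Delta_i$, I pick a tubular neighbourhood $T$ of a punctured-disk neighbourhood of $\beta(\partial\DD)$ inside $\Delta_i\setminus\{\bar{0}\}$ thin enough that $T\cap\Delta=T\cap\Delta_i$ and that $\gamma_{\alpha_s}:=\alpha_s|_{\partial\DD}$ lies in $T\setminus\Delta_i$ for every $s$ close to $1$. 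Topologically $T\setminus\Delta_i$ retracts onto $(\Delta_i\setminus\{\bar{0}\})\times S^1$, so $\pi_1(T\setminus\Delta_i)=\langle p\rangle\oplus\langle t\rangle\cong\mathbb{Z}^2$, where $p$ is the class of a loop winding once around the origin along $\Delta_i$ and $t$ the class of a small loop transversal to $\Delta_i$. The class of $\gamma_{\alpha_s}$ in this group is $p\cdot t^{k}$ for a non-negative integer $k$ (the intersection multiplicity of $\alpha_s(\DD)$ with $\Delta_i$ at the origin), independent of $s\in[0,1)$ by homotopy invariance. The $d$-sheeted cover $\pi|_X^{-1}(T\setminus\Delta_i)\to T\setminus\Delta_i$ then produces two commuting permutations $\sigma_p,\sigma_t\in\mathrm{Sym}(d)$, and by the hypothesis together with Lemma~\ref{lema-equiv-irred-monod}, $\sigma_p\sigma_t^{\,k}$ is a $d$-cycle.

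Next I identify the reduced covering $V_\beta\setminus\{\bar{0}\}\to\Delta_i\setminus\{\bar{0}\}$ with the quotient cover whose sheets are the $\sigma_t$-orbits on $\{1,\dots,d\}$: over a generic point of $\Delta_i$ the $d$ sheets of $\pi$ collide precisely according to the cycle structure of the transversal monodromy. Under this identification the monodromy of $\gamma_\beta$ on $V_\beta\setminus\{\bar{0}\}$ becomes the descent $\overline{\sigma_p}$ of $\sigma_p$ to $Y:=\{1,\dots,d\}/\langle\sigma_t\rangle$, which is well-defined since $\sigma_p$ commutes with $\sigma_t$. As $\sigma_t^{\,k}$ acts as the identity on $Y$, the product $\sigma_p\sigma_t^{\,k}$ descends to the same $\overline{\sigma_p}$; any proper $\overline{\sigma_p}$-invariant subset of $Y$ would lift to a proper $\sigma_p\sigma_t^{\,k}$-invariant union of $\sigma_t$-orbits in $\{1,\dots,d\}$, contradicting transitivity of $\sigma_p\sigma_t^{\,k}$. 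Therefore $\overline{\sigma_p}$ is transitive, $V_\beta\setminus\{\bar{0}\}$ is connected, and $V_\beta$ is irreducible as a reduced germ at the origin.

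The step I expect to require most care is the identification of the reduced cover $V_\beta\to\Delta_i$ with the quotient of the nearby $d$-sheeted cover by $\sigma_t$, together with the matching assertion that the monodromy of $\gamma_\beta$ is precisely $\overline{\sigma_p}$; making this precise requires a careful local study of how the $d$ sheets of $\pi$ specialise along $\Delta_i$. A secondary point, circumvented by keeping $\beta(\partial\DD)$ inside the smooth locus of $\Delta_i$ before constructing $T$, is the possible singularity of $\Delta_i$ at the origin, which would prevent $T$ from being a trivial disk bundle globally but is harmless on the piece relevant to our loop.
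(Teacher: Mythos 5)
Your proof is correct in substance and reaches the conclusion by a genuinely different mechanism than the paper's. The paper also starts from the given deformation, but then argues purely topologically: the closure of the braid over $\gamma_i(\partial\DD)$ is a knot, it stays a knot for $s<1$, and the link over the parametrization of $\Delta_i$ is a continuous image of that knot, hence connected, hence the reduced arc-section is irreducible. You instead localize near $\Delta_i$, use that the complement of $\Delta_i$ in a tubular neighbourhood of the relevant annular piece has fundamental group $\mathbb{Z}^2$, write the monodromy of the nearby boundary loop as a product $\sigma_p\sigma_t^{\,k}$ of commuting permutations, and descend to the quotient by $\langle\sigma_t\rangle$-orbits. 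Your route is more algebraic and is arguably more careful exactly where the paper is informal (the existence and continuity of the surjection from the knot onto the limit link), and it isolates cleanly why transitivity survives the degeneration: a transitive permutation commuting with $\sigma_t$ remains transitive on the orbit space.

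One caveat, at precisely the step you flag as delicate: the fibre of the reduced cover $V_\beta\setminus\{\bar{0}\}\to\Delta_i\setminus\{\bar{0}\}$ over a generic point $q$ need not be in bijection with the set $Y$ of $\sigma_t$-orbits, because sheets lying in \emph{distinct} $\sigma_t$-orbits can also collide along $\Delta_i$. For example, for $z^2=xy^2$ (irreducible) with $\Delta_i=\{y=0\}$, the transversal monodromy along $\Delta_i$ is trivial, so $Y$ has two elements, yet the fibre over $(a,0)$, $a\neq 0$, is the single point $z=0$. What is true is that the specialization map $\{1,\dots,d\}\to\pi^{-1}(q)\cap X$ factors through $Y$ and intertwines $\sigma_p$ with the monodromy of the reduced cover, so the actual fibre is an equivariant quotient of $(Y,\overline{\sigma_p})$. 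Since transitivity passes to equivariant quotients, your conclusion is unaffected; you just cannot assert that the collision pattern is ``precisely'' the cycle structure of $\sigma_t$. With that one-line repair the argument is complete.
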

\begin{proof}
Let $\gamma_i:(\mathbb{D},0)\to (\mathbb{C}^2,0)$ be such an arc, and
$\delta:(\mathbb{D},0)\to (\mathbb{C}^2,0)$ be a parametrization of $\Delta_i$.
Take a continuous deformation $T:(\mathbb{D},0)\times I\to (\mathbb{C}^2,0)$
such that $T\mid_{(\mathbb{D},0)\times\{0\}}=\gamma_i$, $T\mid_{(\mathbb{D},1)\times\{0\}}=\delta_i$,
and $T(\partial\mathbb{D}\times [0,1))\cap \Delta=\emptyset$.

The restriction of this deformation to the boundary of $\mathbb{D}$ gives a
continuous deformation from the closure of
the braid corresponding to $\gamma_i$ to the closure of the braid corresponding
to $\Delta_i$. Since the arc section of $\gamma_i$ is irreducible, its braid is
transitive, or equivalently, its closure is a knot. Since the deformation of the
arcs does not intersect the discriminant, the different steps of the deformation
(except for the last one) are knots isotopic to the original one. So, the closure
of the braid corresponding to $\Delta_i$ is the image of a knot by a continuous function, and hence, it must have also only one connected component. Hence the
braid corresponding to $\Delta_i$ must also be transitive.
\end{proof}

This phenomenon happens, for instance, with the arcs that are parallel to the
branch $\Delta_i$ in the chart centered at the normal crossing between $\Delta_i$
and the rest of the exceptional divisor. So if we are interested in finding
irreducible arc sections, in the previous method we can skip the cases around
branches of the discriminant whose parametrization doesn't give an irreducible
arc section.

\subsection{Sections over arcs contained in the discriminant}
We finish with an example where the behaviour over a parametrization of a branch
of the discriminant changes under a small change of the projection.

\begin{ex}
Consider the surface germ given by $z^3-(x-y)(x+y)(x-2y)(x+2y)$. If we project
in the $z$ direction, the discriminant is formed by the four lines $x=y$,
$x=-y$, $x=2y$ and $x=-2y$.
Over a generic arc such as $x=0$ we get an irreducible arc-section (with equation $z^3-y^4$).

Over each point of the discriminant, there is only one point of the surface. So, if we take an arc that parametrizes one component of the discriminant, the arc section that we get is a multiple line.

Now consider a line that doesn't go through the origin, say $x=\epsilon$
for some small $\epsilon>0$. It meets transversally the discriminant in four
points (one for each component) $(\epsilon,\epsilon), (\epsilon,-\epsilon),
(\epsilon,2\epsilon), (\epsilon,-2\epsilon)$. Over each of these four
points, there is only one preimage. It is easy to check that the local
equation for the arc section over the arc that parametrizes the line
$x=\epsilon$ centered at any of these four points is a vertical flex.


\begin{figure}[th]
    \begin{center}
\begin{tikzpicture}[y=0.80pt, x=0.80pt, yscale=-1.30000, xscale=1.30000, inner sep=-2pt, outer sep=-2pt]
  \path[draw=black,line join=miter,line cap=butt,even odd rule,line width=0.212pt]
    (41.5774,93.6488) .. controls (43.4673,92.5149) and (151.9464,93.6488) ..
    (151.9464,93.6488) -- (177.0818,68.5134) -- (65.7679,68.5134) --
    (41.8609,92.4204);
  \path[draw=black,line join=miter,line cap=butt,even odd rule,line width=0.265pt]
    (76.9292,88.3251) -- (123.5871,72.1362);
  \path[draw=black,line join=miter,line cap=butt,even odd rule,line width=0.212pt]
    (112.6369,69.4583) -- (97.1399,89.4911);
  \path[draw=black,line join=miter,line cap=butt,even odd rule,line width=0.212pt]
    (100.5417,69.0804) -- (112.6369,89.1131);
  \path[draw=black,line join=miter,line cap=butt,even odd rule,line width=0.212pt]
    (91.4702,72.1042) -- (126.6220,87.2232);
  \path[draw=black,line join=miter,line cap=butt,even odd rule,line width=0.212pt]
    (74.4613,86.0893) -- (137.5833,86.0893);
  \path[draw=black,line join=miter,line cap=butt,even odd rule,line width=0.212pt]
    (72.9494,52.7329) .. controls (72.1935,51.9769) and (137.2054,52.7329) ..
    (137.2054,52.7329) -- (137.2054,4.3519) -- (73.3274,4.3519) -- cycle;
  \path[draw=black,dash pattern=on 0.21pt off 1.69pt,line join=miter,line
    cap=butt,miter limit=4.00,even odd rule,line width=0.212pt] (83.6272,85.9003)
    -- (83.6272,26.0856);
  \path[draw=black,dash pattern=on 0.21pt off 1.69pt,line join=miter,line
    cap=butt,miter limit=4.00,even odd rule,line width=0.212pt] (99.7857,85.9003)
    -- (99.7857,26.1801);
  \path[draw=black,dash pattern=on 0.21pt off 1.27pt,line join=miter,line
    cap=butt,miter limit=4.00,even odd rule,line width=0.212pt] (110.7470,85.9003)
    -- (110.7470,26.1801);
  \path[draw=black,dash pattern=on 0.21pt off 1.27pt,line join=miter,line
    cap=butt,miter limit=4.00,even odd rule,line width=0.212pt] (123.9762,86.2783)
    -- (123.9762,26.5580);
  \path[draw=black,line join=miter,line cap=butt,even odd rule,line width=0.212pt]
    (73.8943,35.8185) .. controls (80.8869,37.6138) and (83.8162,31.3772) ..
    (83.6272,26.0856) .. controls (83.4382,20.7939) and (87.0290,15.3132) ..
    (93.6436,17.2031);
  \path[draw=black,line join=miter,line cap=butt,even odd rule,line width=0.212pt]
    (93.1904,33.7287) .. controls (97.8206,34.7681) and (99.8995,31.5553) ..
    (99.7105,26.2637) .. controls (99.5215,20.9720) and (101.8838,19.2711) ..
    (104.9076,18.0427);
  \path[draw=black,line join=miter,line cap=butt,even odd rule,line width=0.212pt]
    (104.8131,32.3113) .. controls (107.4590,31.9333) and (110.8608,31.0829) ..
    (110.6718,25.7912) .. controls (110.4828,20.4995) and (113.2231,20.4050) ..
    (115.3020,19.8381);
  \path[draw=black,line join=miter,line cap=butt,even odd rule,line width=0.212pt]
    (118.6093,33.8232) .. controls (121.2551,33.4452) and (124.0899,30.2324) ..
    (123.9009,24.9407) .. controls (123.7120,19.6491) and (126.3578,19.0821) ..
    (128.1532,17.8537);
  \path[draw=black,line join=miter,line cap=butt,even odd rule,line width=0.212pt]
    (42.8691,225.6095) .. controls (44.7590,224.4755) and (153.2382,225.6095) ..
    (153.2382,225.6095) -- (178.3736,200.4740) -- (67.0596,200.4740) --
    (43.1526,224.3810);
  \path[draw=black,line join=miter,line cap=butt,even odd rule,line width=0.212pt]
    (75.7531,218.0499) -- (138.8751,218.0499);
  \path[draw=black,line join=miter,line cap=butt,even odd rule,line width=0.212pt]
    (74.2412,184.6935) .. controls (73.4852,183.9376) and (138.4971,184.6935) ..
    (138.4971,184.6935) -- (138.4971,136.3126) -- (74.6191,136.3126) -- cycle;
  \path[draw=black,dash pattern=on 0.20pt off 1.23pt,line join=miter,line
    cap=butt,miter limit=4.00,even odd rule,line width=0.204pt] (87.2813,217.8609)
    -- (87.2813,156.5815);
  \path[draw=black,dash pattern=on 0.22pt off 1.30pt,line join=miter,line
    cap=butt,miter limit=4.00,even odd rule,line width=0.217pt]
    (101.0775,217.8610) -- (101.0775,155.3059);
  \path[draw=black,dash pattern=on 0.21pt off 1.27pt,line join=miter,line
    cap=butt,miter limit=4.00,even odd rule,line width=0.212pt]
    (113.0037,217.8610) -- (113.0037,158.1407);
  \path[draw=black,dash pattern=on 0.21pt off 1.27pt,line join=miter,line
    cap=butt,miter limit=4.00,even odd rule,line width=0.212pt]
    (127.6303,218.2389) -- (127.6303,158.5187);
  \path[draw=black,line join=miter,line cap=butt,even odd rule,line width=0.212pt]
    (82.0208,221.4048) .. controls (83.7690,218.0502) and (106.8728,210.3490) ..
    (106.8728,210.3490) .. controls (106.8728,210.3490) and (78.9970,220.1291) ..
    (76.9182,220.5543);
  \path[draw=black,dash pattern=on 0.21pt off 1.23pt,line join=miter,line
    cap=butt,miter limit=4.00,even odd rule,line width=0.206pt] (84.4304,217.9557)
    -- (84.4304,161.4483);
  \path[draw=black,line join=miter,line cap=butt,even odd rule,line width=0.212pt]
    (99.7857,221.8772) .. controls (100.2270,215.9711) and (106.9673,210.3490) ..
    (106.9673,210.3490) .. controls (106.9673,210.3490) and (96.9981,219.6566) ..
    (94.9665,220.1763);
  \path[draw=black,line join=miter,line cap=butt,even odd rule,line width=0.212pt]
    (92.9936,174.2694) .. controls (95.6394,173.8914) and (101.1768,172.1917) ..
    (101.0256,166.8989) .. controls (100.9311,163.5916) and (98.5573,164.6138) ..
    (98.3683,160.6451) .. controls (98.5573,156.8181) and (104.2695,153.0558) ..
    (105.0255,152.6778);
  \path[draw=black,dash pattern=on 0.21pt off 1.24pt,line join=miter,line
    cap=butt,miter limit=4.00,even odd rule,line width=0.207pt] (98.3683,218.0030)
    -- (98.3683,160.6451);
  \path[draw=black,line join=miter,line cap=butt,even odd rule,line width=0.212pt]
    (114.6213,221.4992) .. controls (114.5107,218.5696) and (106.9673,210.3490) ..
    (106.9673,210.3490) .. controls (106.9673,210.3490) and (116.7946,219.7983) ..
    (118.2593,220.2236);
  \path[draw=black,dash pattern=on 0.22pt off 1.32pt,line join=miter,line
    cap=butt,miter limit=4.00,even odd rule,line width=0.219pt]
    (115.3772,217.8140) -- (115.3772,153.9360);
  \path[draw=black,line join=miter,line cap=butt,even odd rule,line width=0.212pt]
    (107.4673,171.7650) .. controls (110.1131,171.3870) and (115.6505,169.6874) ..
    (115.4992,164.3945) .. controls (115.4047,161.0872) and (113.0310,162.1095) ..
    (112.8420,158.1407) .. controls (113.0310,154.3137) and (118.7432,150.5514) ..
    (119.4992,150.1734);
  \path[draw=black,line join=miter,line cap=butt,even odd rule,line width=0.212pt]
    (131.9137,221.4048) .. controls (132.7749,218.5069) and (106.9673,210.3490) ..
    (106.9673,210.3490) .. controls (106.9673,210.3490) and (132.2458,219.0100) ..
    (137.2512,219.1819);
  \path[draw=black,dash pattern=on 0.22pt off 1.32pt,line join=miter,line
    cap=butt,miter limit=4.00,even odd rule,line width=0.220pt]
    (131.9137,217.8140) -- (131.9137,153.6525);
  \path[draw=black,line join=miter,line cap=butt,even odd rule,line width=0.212pt]
    (124.5753,171.7476) .. controls (127.2212,171.3696) and (132.3238,171.0861) ..
    (132.1348,165.7944) .. controls (131.9459,160.5028) and (127.8193,162.4874) ..
    (127.6303,158.5187) .. controls (127.8193,154.6917) and (135.8513,150.5339) ..
    (136.6072,150.1560);
  \path[draw=black,line join=miter,line cap=butt,even odd rule,line width=0.212pt]
    (81.3201,180.3836) .. controls (88.3849,178.8980) and (88.2508,172.0091) ..
    (85.7388,167.3478) .. controls (83.2268,162.6865) and (84.0204,156.1826) ..
    (90.7881,154.9490);

\end{tikzpicture}
\end{center}
\caption{A rotation causes the discriminant to unfold from lines to cusps.}
\end{figure}
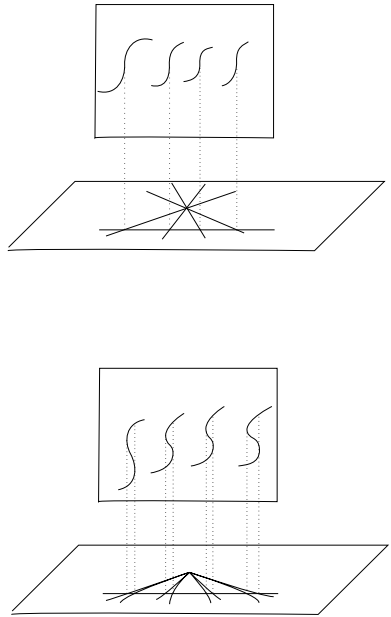

Now we make a small deformation of the projection (e.g. make the change
of variables $y=y+\delta z$ for some small $\delta>0$).
Then the four lines of the discriminant get deformed into $4$ cusps.
Let's see the arc section for an arc that parametrizes one of them. To do
so, consider again the section over the line $x=\epsilon$. It is the same
small change of variables aplied to the previous curve with four vertical
flexes. If we rotate slightly a vertical flex, it splits in two different
simple vertical tangencies. That means that over each point of the
discriminant, there will be two points of the surface: one with multiplicity
$2$ and other
with multiplicity $1$. Hence, the arc section corresponding to a
parametrization of one of the cusps will contain a component with multiplicity $2$
and another with multiplicity $1$. That is, it will be reducible.

Note that both projections are non-transversal to the surface and have different discriminants but the generic arc section is irreducible in both cases.

\end{ex}

If we apply the previous algorithm to this example, we can find a
smooth arc whose arc section has $3$ components, just by making it have
contact $3$ with one of the lines of the discriminant. An explicit equation could be $(t-t^3,t+t^3)$; then the equation for the arc section is $-12 t^{10} - 40 t^{8} - 12 t^{6} + z^{3}$, which is analytically equivalent to $ - 12 t^{6} + z^{3}$. Clearly this factorizes as the product of three components.

\bibliographystyle{unsrt}
\bibliography{biblio}
%

\end{document}